\definecolor{webgreen}{rgb}{0,.5,0}
\definecolor{webbrown}{rgb}{.6,0,0}
\newcommand{\seqnum}[1]{\href{https://oeis.org/#1}{\rm \underline{#1}}}
\author{J.-P. Allouche \\
CNRS, IMJ-PRG \\
Sorbonne, 4 Place Jussieu \\
F-75252 Paris Cedex 05 \\
France \\
{\tt jean-paul.allouche@imj-prg.fr}\\
\and
J. Shallit \\
School of Computer Science \\
University of Waterloo \\
Waterloo, Ontario  N2L 3G1 \\
Canada \\
{\tt shallit@uwaterloo.ca}
\and
R. Yassawi \\
School of Mathematics and Statistics \\
The Open University \\
Walton Hall, Kents Hill\\
Milton Keynes MK76AA \\
United Kingdom \\
{\tt reem.yassawi@open.ac.uk}
}
\title{How to prove that a sequence is not automatic}
\date{ }
\def \proof{\bigbreak\noindent{\it Proof.\ \ }}
\def \endpf{{\ \ $\Box$ \medbreak}}
\def\suchthat{ \, : \, }
\def\Enn{\mathbb{N}}
\newtheorem{theorem}{Theorem}
\newtheorem{lemma}[theorem]{Lemma}
\newtheorem{corollary}[theorem]{Corollary}
\newtheorem{remark}[theorem]{Remark}
\newtheorem{example}[theorem]{Example}
\newtheorem{definition}[theorem]{Definition}
\begin{document}

\maketitle

\begin{abstract}
Automatic sequences have many properties that other sequences 
(in particular, non-uniformly morphic sequences) do not 
necessarily share. In this paper we survey a number of 
different methods that can be used to prove that a given 
sequence is not automatic. When the sequences take their 
values in the finite field ${\mathbb F}_q$, this also permits 
proving that the associated formal power series are 
transcendental over ${\mathbb F}_q(X)$.
\end{abstract}

\section{Introduction}

Automatic sequences can be found in several fields, 
particularly in view of their nature being ``deterministic 
but possibly chaotic-like''. They are more ``regular'' than 
other sequences; in particular, than general non-uniformly 
morphic sequences. Several papers prove that given sequences 
or families of sequences are not automatic, using a variety 
of methods. The purpose of this survey is to give a manual for proving (or trying to prove) that a given sequence is not automatic. 
The method essentially consists of finding, for each 
considered sequence, a relatively ``easy-to-check'' criterion 
for being automatic that is not satisfied by the sequence. 
In the case where the sequence takes its values in a finite 
field ${\mathbb F}_q$, proving that it is not $q$-automatic 
gives a proof of the transcendence of the associated formal 
power series over the field ${\mathbb F}_q(X)$, by means of a celebrated theorem 
of Christol (see \cite{Christol, CKMFR}).

\medskip
For a general approach to automatic and morphic sequences, 
the reader can consult, e.g., \cite{AS, Fogg, Haeseler, Queffelec}. 
We recall some definitions here. 

\begin{itemize}

\item If $A$ is an {\em alphabet} (i.e., a finite set), 
we let $A^*$ denote the set of {\em words} over $A$, 
including the empty word (i.e., the set of finite sequences 
on $A$, including the empty sequence). 

\item The set $A^*$ can be equipped with a structure of a 
(free) {\em monoid}, with multiplication being {\em concatenation} 
of words. 

\item A {\em morphism} $\varphi$ from an alphabet $A$ to an 
alphabet $B$ is a map $A^* \to B^*$ such that for all
$u, v \in A^*$ one has $\varphi(uv) = \varphi(u) \varphi(v)$.  
Clearly, a morphism is completely specified from its values on 
$A$ alone. If $A = B$, the morphism is called a {\em morphism 
on $A$}. If $A = \{a_1, \ldots, a_d\}$, the {\em transition matrix} 
(or {\em adjacency matrix}) of $\varphi$ is the matrix 
$M = (m_{i,j})$ where $m_{i,j}$ is the number of occurrences 
of the letter $a_i$ in $\varphi(a_j)$.

\item The morphism $\varphi$ is called {\em uniform} if the 
lengths of the images  of each letter in $A$ by $\varphi$ are 
the same. If this length is equal to $q$, the morphism is 
called {\em $q$-uniform} or a {\em $q$-morphism}. 

\item Let $\varphi$ be a morphism on the alphabet $A$. 
If there exist a letter $a \in A$ and a non-empty word 
$v \in A^*$ such that $\varphi(a) = a v$ and no $\varphi^k(a)$ 
is empty, the sequence of words $a, \varphi(a), \varphi^2(a), 
\ldots$ converges (for the product topology) to an infinite 
sequence on $A$, namely the sequence
$$
a \, v \, \varphi(v) \, \varphi^2(v)\cdots \varphi^k(v) \cdots,
$$
which is a fixed point of $\varphi$ extended to infinite 
sequences on $A$. This limit is called a {\em fixed point} 
or an {\em iterative fixed point} of the morphism $\varphi$.

\item If a sequence is the image of the fixed point of a 
morphism by a $1$-morphism, it is called {\em morphic}.

\item If a sequence is morphic for a $q$-uniform morphism, it is called {\em $q$-automatic}. A sequence that is
$q$-automatic for some $q \geq 2$ is called {\em automatic}.

\item For $x$ a finite or infinite word
by $x[i..j]$ we mean $x[i]\cdots x[j]$, where
$x[i]$ is the $i$th letter of $x$.

\end{itemize}

\bigskip

The most famous example of an automatic sequence (more specifically, a
$2$-automatic sequence) is the (Prouhet-)Thue-Morse sequence 
${\mathbf u} = (u_n)_{n \geq 0}$ where $u_n$ is the sum, 
reduced modulo $2$, of the binary digits of $n$. It is not 
difficult to see that this sequence is the iterative fixed 
point, starting with $0$, of the morphism defined on $\{0, 1\}$ 
by $0 \to 01$, $1 \to 10$.

\medskip

One of the most famous examples of a morphic sequence is the 
binary Fibonacci sequence ${\bf f} = 01001010\cdots$, defined as the iterative fixed point,
starting with $0$, of the morphism defined on $\{0, 1\}$ by 
$0 \to 01$, $1 \to 0$.

\section{Infinite $q$-kernels}

The $q$-kernel of a sequence
${\bf a}=(a_n)_{n \geq 0}$ is the set of linearly-indexed subsequences 
$${\rm Ker}_q \ {\bf a} = \{(a_{q^k n + r})_{n \geq 0}, 
\ k \geq 0, \  r \in [0, q^k - 1]\}.$$

A necessary and sufficient condition for a sequence 
to be $q$-automatic is 
that its $q$-kernel be finite.
Thus, to prove that a sequence is not $q$-automatic, it 
suffices to exhibit some subset of its $q$-kernel of infinite cardinality. 
In particular, the following result often proves useful.
\begin{theorem}\label{kernel}
Let ${\mathbf a} = (a_n)_{n \geq 0}$ be a sequence and $q$ 
be an integer $\geq 2$. If there exists a sequence of integers 
$(r_k)_{k \geq 0}$ such that $r_k \in [0,q^k)$ and the subsequences
$(a_{q^k n + r_k})_{n \geq 0}$ are all distinct, then the 
sequence ${\mathbf a}$ is not $q$-automatic.
\end{theorem}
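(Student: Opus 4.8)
The plan is to invoke the characterization, recalled immediately before the statement, that a sequence is $q$-automatic if and only if its $q$-kernel $\mathrm{Ker}_q\,\mathbf{a}$ is finite. Accordingly, to prove that $\mathbf{a}$ is \emph{not} $q$-automatic it suffices to exhibit an infinite subset of $\mathrm{Ker}_q\,\mathbf{a}$, and the hypothesis hands us exactly such a subset. So the whole proof reduces to reading off that the family supplied by the hypothesis sits inside the kernel and is infinite.

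Concretely, I would first check that each of the given subsequences indeed lies in the $q$-kernel. For every $k \geq 0$ the hypothesis gives $r_k \in [0,q^k)$, that is, $r_k \in \{0,1,\ldots,q^k-1\}$, so $(a_{q^k n + r_k})_{n \geq 0}$ is precisely one of the subsequences $(a_{q^k n + r})_{n \geq 0}$ with $r \in [0,q^k-1]$ appearing in the definition of $\mathrm{Ker}_q\,\mathbf{a}$. Hence the entire family $\{(a_{q^k n + r_k})_{n \geq 0} \suchthat k \geq 0\}$ is contained in the kernel.

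The next step is to use distinctness. By assumption the subsequences $(a_{q^k n + r_k})_{n \geq 0}$ are pairwise distinct as $k$ ranges over $\mathbb{N}$, so the map $k \mapsto (a_{q^k n + r_k})_{n \geq 0}$ is an injection of $\mathbb{N}$ into $\mathrm{Ker}_q\,\mathbf{a}$. Therefore the kernel contains infinitely many distinct elements and is infinite, which by the characterization forces $\mathbf{a}$ to fail to be $q$-automatic.

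The argument is essentially immediate once the kernel criterion is granted, so there is no serious obstacle in the proof itself; the only point requiring a moment's care is the bookkeeping that the constraint $r_k \in [0,q^k)$ matches the index range $[0,q^k-1]$ in the definition of the kernel, guaranteeing that each chosen subsequence is a genuine member of $\mathrm{Ker}_q\,\mathbf{a}$ rather than an artifact of a different normalization of the endpoints. The real content of such a theorem lies not in this proof but in the applications, where the work is to produce the witnessing offsets $(r_k)_{k \geq 0}$ and verify their distinctness for a concrete sequence.
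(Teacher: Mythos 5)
Your proposal is correct and takes exactly the same route as the paper, which simply declares the result ``immediate from the finite kernel property for automatic sequences''; you have merely spelled out the (trivial) verification that the hypothesized family is an infinite subset of $\mathrm{Ker}_q\,\mathbf{a}$. Nothing more is needed.
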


\proof Immediate from the finite kernel property for automatic 
sequences.  \endpf

\begin{example}\label{ex-kernel1}

This last result has been used several times in the literature.
In particular, a theorem of Christol \cite{Christol, CKMFR} 
asserts that a formal power series $\sum a_n X^n$ with 
coefficients in the finite field ${\mathbb F}_q$ is 
transcendental over the field of rational functions 
${\mathbb F}_q(X)$ if and only if the sequence $(a_n)$ is 
$q$-automatic. Hence, proving that the series $\sum a_n X^n$ 
is transcendental over ${\mathbb F}_q(X)$ is equivalent to 
proving that the $q$-kernel of the sequence $(a_n)$ is not 
finite. Here are some examples of results on the non-finiteness
of $q$-kernels of sequences.

\begin{itemize}

\item A variation on this method was used 
\cite[Lemme fondamental, p.\ 281]{allouche82} to prove that 
the sequence of the $p$-ary sum of digits of $R(n)$, reduced 
modulo $p$, is not $p$-automatic (where $R$ is a polynomial 
of degree at least $2$ that sends the integers to the 
integers). Also see the generalization \cite{allouche-salon}, 
where the sequence of $p$-ary sums of digits is replaced with 
any {\em quasi-strongly-$B$-additive} sequence.

\item (Non-)finiteness of the kernel is used to prove 
transcendence/algebraicity results for formal Drinfeld 
modules in \cite{Cadic1, Cadic2}, via an unpublished proof 
by the first author of a conjecture of Laubie 
(see \cite[Proposition~3.3.1]{Cadic1} or
\cite[Proposition~1]{Cadic2}): 
Let $(a_n)_{n \geq 0}$ be a sequence with values in the
finite field ${\mathbb F}_q$. Then the formal power series
$\sum a_n X^{q^n}$ is algebraic over ${\mathbb F}_q(X)$ if
and only if the formal power series $\sum a_n X^n$ is rational
(i.e., if and only if the sequence $(a_n)_{n \geq 0}$ is
eventually periodic).

\item The infinite fixed point ${\bf v} = (v_n)_{n \geq 0}$ of the morphism $a \to aab$, 
$b \to b$ is not $2$-automatic, as proved in 
\cite{all-shall-betrema}. The study of the sequences 
$(v_{2^k n + 2^k - k})_{n \geq 0}$ for $k \geq 1$ reveals that 
they are all distinct. This fixed point occurs in Von Neumann's 
recursive definition of the integers (see, e.g., 
\cite[Section~3.2]{Goldrei}).

\item 
Let $p$ be a prime number and $q$ a power of $p$. 
Let ${\mathbb F}_q$ denote the finite field with $q$ elements.
The formal power series $\Pi_q$ is an element of 
${\mathbb F}_q((X^{-1}))$ and is an analog of $\pi$. 
A proof that $\Pi_q$ is transcendental over ${\mathbb F}_q(x)$ 
uses Christol's theorem and the fact that $(b_n)_{n \geq 0}$ 
is not $q$-automatic, where $(b_n)_{n \geq 0}$ is the 
characteristic function of the integers that can be written 
as a finite sum $n = \sum (q^j-1)$. It can be proved that 
the sequences $(b_{q^k n + q^k - k})_{n \geq 0}$ are all 
distinct \cite{allouche90}.

\item 
The transcendence of the series $\Pi_q$ in the previous item, 
as well as the transcendence of the so-called {\em bracket 
series}, were proved in \cite{allouche-lotharingien} by 
showing
that a certain sequence $(v_n)_{n \geq 0}$ is not 
$q$-automatic, thanks to a corollary of Theorem~\ref{kernel} 
above: {\it if a sequence is $q$-automatic, then there are 
finitely many sequences distinct of the form $(v_{q^k (n+1) - 1})_{n \geq 0}$, 
and hence the sequence $(v_{q^k -1})_{k \geq 0}$ is ultimately 
periodic.} A result of the same kind in the case of 
$2D$-automatic sequences was used 
in \cite{AHPPS}.

\item Other transcendence results for values of Carlitz 
functions analogous to the Riemann zeta function, to the 
logarithm, etc. have been obtained in approaches similar 
to the previous two items, see, e.g.,
\cite{Berthe93, Berthe94, Berthe95, allouche-gamma, Mendes-Yao, Yao2002, Wen-Yao, Hu2018}.

\item A very sophisticated criterion of transcendence based on
the non-finiteness of the $q$-kernel of a sequence is given in
\cite{Yao}.

\item The fixed point $(d_n)_{n \geq 0}$ of the morphism 
$1 \to 121$, $2 \to 12221$ is not $2$-automatic, which results 
 from the (non-trivial) property that the subsequences 
$(d_{2^{2k} n})_{n \geq 0}$ are all distinct (see \cite{AAS}, 
where the morphism above occurs in the drawing of a classical 
{\em kolam}). Note that the sequence $(e_n)_{n \geq 0}$ defined
by $e_n = d_{n+1}$ for all $n \geq 0$ is the fixed point of 
the morphism $2 \to 211$, $1 \to 2$.

\item Generalizing the result for the morphism $2 \to 211$, 
$1 \to 2$ in the previous item, it can be proved similarly 
that, for $k \geq 1$, the fixed point of the morphism 
$1 \to 1^{k-1}2$, $2 \to 1^{k-1}21^{k+1}$ is not 
$(k+1)$-automatic (see \cite{ASWWZ} where this family of 
morphisms occurs in the study of certain sum-free sets).

\item It is proved in \cite{MRSS} that, if a sequence has 
arbitrarily long blocks in common with a Sturmian sequence, 
then it cannot be $q$-automatic for any $q \geq 2$. This is 
generalized from Sturmian sequences to generalized polynomials 
in \cite{BK1}.

\item It is proved in \cite{Spiegelhofer} that the sequence of gaps 
between consecutive occurrences of a block $w$ with $|w| > 1$ in the
Thue-Morse sequence is substitutive but not $k$-automatic for
any $k \geq 2$.
\end{itemize}

\end{example}

Other kinds of proofs of non-finiteness of $q$-kernels 
have been 
used for ``arithmetic'' sequences, in particular for 
{\em multiplicative sequences}. Recall that a sequence 
$(a_n)_{n \geq 1}$ is called {\em multiplicative} if, for all
$m, n \geq 1$ such that $\gcd(m, n) = 1$, one has 
$f(mn) = f(m)f(n)$. Also recall that the sequence 
$(a_n)_{n \geq 1}$ is called {\em completely multiplicative} 
if, for all $m, n \geq 1$, one has $f(mn) = f(m)f(n)$.

\bigskip
 
The following theorem is \cite[Theorem~2]{Yazdani}.
\begin{theorem}[{\rm \cite{Yazdani}}]\label{yazdani1}
Let $v > 1$ be an integer and $f$ a multiplicative function. 
Assume that for some integer $h \geq 1$ there exist infinitely 
many primes $q_1$ such that $f(q_1^h) \equiv  0 \pmod v$. 
Furthermore assume that there exist relatively prime integers 
$b$ and $c$ such that for all primes $q_2 \equiv c \pmod b$ we
have $f(q_2) \not\equiv 0 \pmod v$. Then the sequence
$(f(n))_{n \geq 1} \pmod v$ is not $q$-automatic for any 
$q \geq 2$.
\end{theorem}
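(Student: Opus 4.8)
The plan is to prove the contrapositive, showing that if the reduced sequence $(f(n) \bmod v)_{n \geq 1}$ \emph{were} $q$-automatic for some $q \geq 2$, then the two hypotheses on $f$ would be incompatible. The central tool is Theorem~\ref{kernel}: automaticity forces a finite $q$-kernel, so only finitely many distinct subsequences of the form $(a_{q^k n + r_k})_{n \geq 0}$ can occur. My strategy is to extract from the two arithmetic hypotheses a family of kernel subsequences that I can prove are pairwise distinct, contradicting finiteness.

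First I would exploit the multiplicativity of $f$ together with the first hypothesis. There are infinitely many primes $q_1$ with $f(q_1^h) \equiv 0 \pmod v$; for any such prime and any $n$ coprime to $q_1$, multiplicativity gives $f(q_1^h n) = f(q_1^h) f(n) \equiv 0 \pmod v$. So along the arithmetic progression indexed by $q_1^h$, the sequence vanishes modulo $v$ whenever the cofactor is coprime to $q_1$. The second hypothesis supplies the contrast: choosing a prime $q_2 \equiv c \pmod b$ (which exists, in fact infinitely many, by Dirichlet since $\gcd(b,c)=1$), we get $f(q_2) \not\equiv 0 \pmod v$, and more generally $f$ does not vanish on suitable arguments built from such primes. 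The point is to locate, for infinitely many values of a scaling parameter, positions where the subsequence is forced to be nonzero, so that subsequences obtained at different scales cannot coincide.

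The key step is to package these observations as a $q$-kernel statement. I would look at subsequences $(a_{q^k n + r_k})_{n\geq 0}$ where $r_k$ is chosen to land on a multiple of some large prime power $q_1^h$ (forcing many zero entries), while simultaneously arranging, via a prime $q_2$ in the progression $c \bmod b$, a position within the same subsequence where the value is nonzero. By taking the $q_1$ larger and larger as $k$ grows, the location of the guaranteed nonzero entry and the pattern of forced zeros shift, so that no two of these subsequences agree. Alternatively, and perhaps more cleanly, I would show directly that the set of distinct subsequences must be infinite by producing, for each $k$, a subsequence whose ``first nonzero position'' (or whose zero-set intersected with a residue class) is different from all previous ones. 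Either packaging contradicts the finiteness of the $q$-kernel and hence, by Theorem~\ref{kernel}, rules out $q$-automaticity; since $q$ was arbitrary, the sequence is automatic for no base.

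The main obstacle I anticipate is \emph{aligning the two hypotheses against a common base $q$}. The primes $q_1$ and $q_2$ are given abstractly, with no control relating them to $q$; in particular, the residues $r_k \in [0, q^k)$ in the kernel must be chosen so that the congruence conditions ``index $\equiv 0 \pmod{q_1^h}$'' and ``index hits a multiple of $q_2$'' are simultaneously realizable within a single block of length $q^k$. Managing coprimality of the cofactors to the relevant primes (so that multiplicativity applies and the forced zero/nonzero dichotomy is genuine) while keeping the indices inside the admissible range is the delicate bookkeeping. A secondary subtlety is ensuring the chosen $q_2$ stays coprime to $q$ and to the $q_1$'s, so that $f(q_2) \not\equiv 0$ really does witness a nonzero entry at the intended position rather than being swallowed by an unintended common factor; Dirichlet's theorem gives enough primes in the progression $c \bmod b$ to dodge the finitely many bad ones at each stage.
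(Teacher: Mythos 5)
First, a caveat on the comparison: the survey itself does not prove this theorem --- it is quoted from Yazdani's paper --- so the benchmark is the standard kernel-based argument that the theorem's placement in the section on infinite $q$-kernels points to, and which your plan indeed follows. Your ingredients are the right ones: multiplicativity plus the first hypothesis forces $f(p^h m)\equiv 0 \pmod v$ whenever $p$ is one of the given primes and $\gcd(m,p)=1$; Dirichlet's theorem plus the second hypothesis forces $f(Q)\not\equiv 0 \pmod v$ when the index $Q$ is itself a prime $\equiv c \pmod b$; and the contradiction must come from the finiteness of the $q$-kernel.

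The genuine gap is in the distinctness step. You argue that because the pattern of \emph{forced} zeros and the location of a \emph{forced} nonzero entry ``shift'' as $k$ grows, no two of your kernel subsequences can agree. That inference is invalid: the hypotheses give no control over values at unforced positions, so two subsequences with different forced patterns can perfectly well coincide as sequences (the zero set of $f \bmod v$ may be far larger than the forced zeros, and a ``first nonzero position'' cannot be pinned down at all). What is actually needed --- and what is the heart of Yazdani's proof --- is a \emph{cross-pair} condition: for two kernel elements $(f(q^{k}n+r_k))_{n}$ and $(f(q^{k'}n+r_{k'}))_{n}$ (either an arbitrary pair, or, more cleanly, the pair produced by pigeonhole from the assumed finiteness of the kernel), you must exhibit a single index $n$ at which $q^{k}n+r_k$ is \emph{exactly} divisible by a designated prime power $p^h$ (forcing the value $\equiv 0$) while simultaneously $q^{k'}n+r_{k'}$ is a prime $\equiv c \pmod b$ (forcing the value $\not\equiv 0$). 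This means restricting $n$ to a residue class modulo $p^{h+1}$, building the compatible congruences (modulo $p^{h+1}$, modulo $\gcd(q^{k'},b)$, plus coprimality to $q$ and to the other chosen primes) into $r_k$ and $r_{k'}$ by CRT, and then applying Dirichlet to the resulting progression of arguments. In your write-up the two forcing devices are only ever combined \emph{within} a single subsequence, never \emph{across} the pair being compared, so no pair is ever actually proved distinct. A second, smaller slip: you ask that the index ``hits a multiple of $q_2$''; a multiple of $q_2$ does not force a nonzero value (for composite $v$ a product of residues each $\not\equiv 0$ can be $\equiv 0 \pmod v$), so the index must be made \emph{equal} to a prime $\equiv c \pmod b$ --- which is exactly why Dirichlet, and not mere divisibility bookkeeping, is indispensable.
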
   
    
\begin{example}\label{ex-kernel2}

There are several papers about (completely) multiplicative 
functions and their (non-)automaticity in the literature. 
Here we give one theorem and a few recent references.

\begin{itemize} 

\item Recall that the multiplicative sequences $\sigma_m$,
$\varphi$ and $\mu$, are defined as follows: 
for $m \geq 1$ $\sigma_m(n) := \sum_{d|n} d^m$, $\varphi(n)$ 
is the Euler totient function, and $\mu(n)$ is the M\"obius 
function. Theorem~\ref{yazdani1} was used in \cite{Yazdani} 
to prove that $(\sigma_m(n) \pmod v)_{n \geq 1}$ and 
$(\varphi(n) \pmod v)_{n \geq 1}$ are not $q$-automatic for
any $q \geq 2$ and $v \geq 3$.  The case $v = 2$ is also addressed
in \cite[Thm.~7]{Yazdani}, where the author uses a theorem of
Minsky and Papert (Theorem~\ref{minsky-papert} below), and 
the fact that $(\mu(n) \pmod v)_{n \geq 1}$ is not $q$-automatic for any 
$q \geq 2$ and $v \geq 2$.

\item Recent papers on (completely) multiplicative sequences that
are (non-)automatic use a variety of methods: see
\cite{allouche-goldmakher, Hu, Li, KK1, KK2, Konieczny, KLM}.
In particular Konieczny, Lema\'nczyk, and M\"uller \cite{KLM} 
give a complete characterization of automatic multiplicative
sequences.

\end{itemize}

\end{example}

\begin{remark}
A classical property of regular languages is that they 
satisfy the ``pumping lemma''\footnote{Though some people 
try to translate it literally, the correct name of this lemma 
in French is ``le lemme de l'\'etoile".} (see, e.g., 
\cite[Lemma~4.2.1]{AS}).    Let $C_q$ be the set of all canonical base-$q$ representations of natural numbers (with no leading zeroes).   Since a sequence $(a_n)_{n \geq 0}$ 
is $q$-automatic if and only if all the languages 
$$
L_b = \{n_q, \ \text{$n_q \in C_q$ is the 
$q$-ary expansion of $n$ and} \ a_n = b\}
$$
are regular, a way to prove that a sequence $(a_n)_{n \geq 0}$ 
is not $q$-automatic is to prove that for some value $b$, the 
language $L_b$ is not regular ---which can be done, e.g., by
using the pumping lemma. 

As an example of this approach, consider the characteristic 
sequence of the set $S = \{ 2^n (2^n -1 ) , n \geq 0 \}$. 
Here the language of base-$2$ expansions of $S$ is 
$\{ 1^n 0^n , n \geq 0 \}$, a classical non-regular language. 
Thus the characteristic sequence of $S$ is not $2$-automatic. 
Another example is given in \cite{Shallit-automaticity4}: the 
fixed point beginning with $c$ of the morphism  $c \to cba$, 
$a \to aa$, and $b \to b$, which is proved to be 
non-$2$-automatic by showing that the language
$L := \{1 0^{n - \lfloor \log_2 n \rfloor - 1}(n)_2, 
\ n \geq 1\} \cup \{1\}$ is not regular.

\end{remark}

\begin{remark}
We have not explicitly spoken of $q$-automata. But, actually,
using the finiteness of the $q$-kernel or the pumping lemma is 
essentially possible because the ``$q$-automaton'' behind an
automatic sequence has a {\em finite} number of states. A nice  
recent paper \cite{Lipka} invokes this property to prove that 
$(\ell_b(n))_{n \geq 1}$, is not a $q$-automatic sequence for 
any $q \geq 2$, where $\ell_b(n)$ is the last nonzero digit of 
$n!$ and $b \geq 2$ is a fixed base such that, if
$b = p_1^{a_1} p_2^{a_2}\cdots$, then there exist at least two $p_i$'s 
such that $a_i(p_i-1) = \max\{a_j(p_j-1), \ j = 1, 2,\ldots \}$. These 
integers form the sequence \seqnum{A135710} in \cite{OEIS} 
(\seqnum{A135710} = $12, 45, 80, 90, 144, 180, 189, 240, 360, \ldots$).
\end{remark}

\begin{remark}   Similarly, the Myhill-Nerode theorem (see, e.g., \cite[Thm.~3.9]{Hopcroft}) can be used to prove that a language is not regular.  In
fact, the $k$-kernel of a binary sequence $(s_n)_{n \geq0}$  is essentially the same size as the number of Myhill-Nerode equivalence classes of the language of reversed canonical base-$k$ representations of $\{n , s_n = 1 \}$.
\end{remark}

\section{Irrational frequencies}

Suppose a sequence $(a_n)_{n \geq 0}$ is $q$-automatic for 
some $q \geq 2$. Then, if the {\em letter frequencies} 
$$
\displaystyle \lim_{n \to \infty} \frac{1}{N} \#\{n \leq N, 
\, a_n = b\}
$$
exist, they must be rational. Hence if the frequency of 
occurrence of some letter in a sequence $(a_n)_{n\geq 0}$ 
taking its values in a finite alphabet exists and is 
irrational, the sequence cannot be $q$-automatic for any 
$q \geq 2$.

\begin{example}
An example of the claim above is given by the Sturmian 
sequences (recall that a sequence is Sturmian if, for all 
$k \geq 0$, it contains exactly $k+1$ distinct
blocks of length $k$). 
A Sturmian sequence necessary takes only two values, and 
the frequencies of occurrence of these two values are 
irrational. Note that this remark makes the title of 
\cite{Tapsoba} redundant, and underlines a mistake in 
\cite{Brimkov-Barneva} (about that paper, also see the review 
MR 2006a:68140 by P. S\'e\'ebold). Speaking of non-automaticity
of Sturmian sequences, it is worth noting that, more generally,
{\em dendric sequences} cannot be $q$-automatic for any 
$q \geq 2$: this is proved in \cite[Corollary~15]{BDDLP}, by 
first proving that these sequences admit no rational 
topological dynamical eigenvalue (see Section~\ref{dyn}). 
Note that, as indicated in that paper, Sturmian sequences, 
episturmian sequences, and codings of interval exchanges are
particular examples of dendric sequences.
\end{example}

It may happen that all frequencies exist and are rational, 
but, in this case, a variation on the observation above can be 
useful:

\begin{theorem}\label{irrational1}
If a sequence $(a_n)_{n \geq 0}$ with values in a finite 
alphabet is such that the frequency of some block occurring 
in this sequence exists and is irrational, then the sequence 
cannot be $q$-automatic for any $q \geq 2$.
\end{theorem}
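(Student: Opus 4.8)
The plan is to reduce the statement about block frequencies to the already-established fact about letter frequencies. Let $w = w_0 w_1 \cdots w_{\ell-1}$ be the block in question, of length $\ell \geq 1$, taking its values in the finite alphabet $A$, and suppose toward a contradiction that $\mathbf{a} = (a_n)_{n \geq 0}$ is $q$-automatic for some $q \geq 2$. I would introduce the \emph{sliding-block sequence} $\mathbf{b} = (b_n)_{n \geq 0}$ over the finite alphabet $A^\ell$ defined by $b_n = a_n a_{n+1} \cdots a_{n+\ell-1}$, so that $b_n$ records the length-$\ell$ window of $\mathbf{a}$ beginning at position $n$.

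The first key step is to check that $\mathbf{b}$ is again $q$-automatic. This follows from two standard closure properties of $q$-automatic sequences: first, closure under shift, so that each of the shifted sequences $(a_{n+i})_{n \geq 0}$ for $0 \leq i \leq \ell-1$ is $q$-automatic; and second, closure under finite Cartesian products, so that the sequence $n \mapsto (a_n, a_{n+1}, \ldots, a_{n+\ell-1})$, which is precisely $\mathbf{b}$ after identifying $\ell$-tuples with words of length $\ell$, is $q$-automatic. Both properties are classical (see, e.g., \cite{AS}); concretely, one can verify them at the level of $q$-kernels, which remain finite under shift and under direct products. Note that the base $q$ is preserved throughout.

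The second step is the simple but crucial observation that the frequency of the \emph{block} $w$ in $\mathbf{a}$ coincides with the frequency of the \emph{letter} $w \in A^\ell$ in $\mathbf{b}$: for every $N$ one has the equality of counts $\#\{n \leq N : a_n a_{n+1} \cdots a_{n+\ell-1} = w\} = \#\{n \leq N : b_n = w\}$, so the two limits exist or fail to exist together and agree when they exist. Since $\mathbf{b}$ is $q$-automatic, its letter frequency of $w$, if it exists, must be rational by the observation preceding this theorem. Hence the block frequency of $w$ in $\mathbf{a}$ would be rational, contradicting the hypothesized irrationality; therefore $\mathbf{a}$ cannot be $q$-automatic for any $q \geq 2$.

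I expect the main obstacle to be the rigorous justification of closure under the sliding-block operation in the first step, since the rest is essentially bookkeeping. The product closure in particular requires checking that the $q$-kernel of the tuple-valued sequence is contained in the set of tuples whose coordinates are drawn from the (finite) $q$-kernels of the coordinate sequences, and is therefore finite; the shift closure requires the standard fact that adding a constant to the index keeps the kernel finite. Once these closure facts are in hand, the identification of the block frequency in $\mathbf{a}$ with a letter frequency in $\mathbf{b}$ completes the argument.
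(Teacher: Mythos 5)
Your proposal is correct and is essentially the paper's own proof: the paper's one-line argument is precisely that the sequence of consecutive overlapping length-$\ell$ blocks of a $q$-automatic sequence is again $q$-automatic, after which the block frequency in $\mathbf{a}$ becomes a letter frequency in the block sequence and the rationality of letter frequencies (noted just before the theorem) gives the contradiction. You have simply filled in the standard closure details (shift and product closure via $q$-kernels) that the paper leaves implicit.
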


\proof The sequence of consecutive overlapping blocks of 
length $k$ occurring in a $q$-automatic sequence is also 
$q$-automatic. 
\endpf

\begin{example}
Theorem~\ref{irrational1} was used in the following examples.
\begin{itemize}

\item
To show that the sequence of moves in the cyclic tower of 
Hanoi algorithm is not $q$-automatic, for any $q \geq 2$, 
it was proved in \cite{allouche-hanoi} that the frequency 
of some three-letter word on the alphabet of moves exists 
and is not rational.

\item
It is shown in \cite{allouche-palanga} that the language of 
all primitive words over a finite alphabet is not unambiguously 
context-free by proving that the square of the M\"obius 
function, $(\mu^2(n))_{n \geq 1}$, is not automatic: the 
frequencies of the values taken by $\mu^2$ are $6 / \pi^2$ 
and $1 - 6/\pi^2$, which are irrational. 

\end{itemize}

\end{example}

\begin{remark}

\ { }

\begin{itemize}

\item Unfortunately, this method does not work all the time: 
there exist morphic sequences that are non-automatic, ``although''
the frequencies of all words occurring in the sequence exist 
and are rational. Namely consider the fixed point of the 
morphism $2 \to 211$, $1 \to 2$. The dominant eigenvalue of 
the matrix of this morphism is equal to $2$. Furthermore the 
morphism is {\it primitive}:  this means that there is a constant $d$ such that $a$ appears in $f^d (b)$ for all letters
$a,b$ in the alphabet. For each $\ell \geq 1$ the associated 
morphism generating the sequence of overlapping blocks of 
length $\ell$ is also primitive and has $2$ as dominant 
eigenvalue (see, e.g.,  \cite[Section~5.4.1]{Queffelec}). 
Hence the frequency of occurrence of every block is a rational 
number (recall that the vector of frequencies of a fixed point 
of a primitive morphism is the normalized eigenvector 
associated with the dominant eigenvalue, and that linear 
equations with rational coefficients have rational 
solutions). But the fixed point of $2 \to 211$, $1 \to 2$ is 
not $q$-automatic for any $q \geq 2$ (see \cite{AAS}).

\item If the frequency for a given letter occurring in a 
sequence does not exist, one can replace the limit in the 
definition of the frequency with limsup or liminf: 
a result in \cite{Bell} asserts that, if the sequence is 
automatic, then both quantities limsup and liminf are rational.

\end{itemize}

\end{remark}

Another result dealing with frequencies, stated in
\cite{ADQ}, is worth noting.

\begin{theorem}[{\rm \cite{ADQ}}]\label{irrational2}
If the adjacency matrix of a primitive non-uniform morphism 
has an irrational dominant eigenvalue, then an iterative fixed 
point of this morphism cannot be automatic.
\end{theorem}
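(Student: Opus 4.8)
The plan is to reduce the statement to the letter-frequency criterion recorded at the start of Section~3: if the frequency of some letter exists and is irrational, then the sequence is not $q$-automatic for any $q \geq 2$. Hence it suffices to exhibit a single letter whose frequency in the iterative fixed point is irrational.

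First I would bring in primitivity. Since the morphism $\varphi$ is primitive, its adjacency matrix $M$ is a primitive nonnegative integer matrix, so the Perron--Frobenius theorem applies: the dominant eigenvalue $\lambda$ is real, positive, and simple, with a strictly positive eigenvector $v$ satisfying $Mv = \lambda v$. Moreover --- exactly as already used in the remark following Theorem~\ref{irrational1} --- for a primitive morphism all letter frequencies of the fixed point exist, and the vector of these frequencies is the normalized eigenvector $v / \sum_i v_i$. Concretely, the Parikh (letter-count) vector of $\varphi^n(a)$ is $M^n$ applied to the Parikh vector of $a$, and Perron--Frobenius gives $M^n e_a \sim c\,\lambda^n v$ for a positive constant $c$, so the proportion of each letter converges to the corresponding normalized component of $v$.

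The arithmetic core is then to show that an irrational $\lambda$ forces some frequency to be irrational. Suppose, for contradiction, that every frequency $v_i / \sum_j v_j$ is rational; then all the $v_i$ are rational multiples of one another, so after rescaling we may take $v$ to be a nonzero (strictly positive) integer vector. But then $Mv$ is an integer vector, and $Mv = \lambda v$ yields $\lambda = (Mv)_i / v_i$ for each index $i$, a ratio of integers and hence rational --- contradicting the hypothesis. Therefore at least one letter frequency is irrational, and the frequency criterion of Section~3 immediately gives that no iterative fixed point of $\varphi$ can be $q$-automatic.

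The routine ingredients are the standard Perron--Frobenius convergence for primitive matrices and the one-line rationality argument; neither should present difficulty. The only point needing care is the identification of the frequency vector with the normalized dominant eigenvector, i.e.\ that the frequencies genuinely \emph{exist} and equal these values --- this is precisely where primitivity is indispensable, and it is the fact I would lean on from the earlier remark rather than reprove. I would also note in passing that the ``non-uniform'' hypothesis is in fact automatic here: a $q$-uniform morphism has every column of $M$ summing to $q$, forcing the dominant eigenvalue to be the integer $q$, so an irrational dominant eigenvalue can arise only for a non-uniform morphism.
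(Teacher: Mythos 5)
Your proof is correct and is essentially the argument the paper has in mind: Theorem~\ref{irrational2} is quoted from \cite{ADQ} without an included proof, but it is stated in Section~3 precisely because it follows from the letter-frequency criterion there together with the paper's own remark that the frequency vector of a fixed point of a primitive morphism exists and equals the normalized Perron eigenvector. Your linear-algebra step (if all normalized eigenvector entries were rational, one could rescale to an integer eigenvector and conclude $\lambda = (Mv)_i/v_i$ is rational) is exactly the intended core of that reduction, so nothing further is needed.
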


\begin{example}
Theorem~\ref{irrational2} was used in \cite{ADQ} to prove 
the non-automaticity of fixed points of morphisms related to 
Grigorchuk-like groups. For example, the fixed point of the 
morphism defined on $\{a, b, c, d\}$ by $a \to aca$, $b \to d$,
$c \to aba$, $d \to c$ (see \cite[Theorem~4.1]{bs}) is not 
automatic. Namely, as noted in \cite{ADQ}, the matrix of this 
morphism is primitive and its characteristic polynomial, which 
is equal to $x^4 - 2 x^3 - 2 x^2 - x + 2$, clearly has no 
rational root.
\end{example}

\section{Synchronization}

Suppose $f$ is a function from $\Enn$ to $\Enn$.  If there is a deterministic finite automaton that recognizes, in
parallel, the base-$k$ representations of $n$ and $f(n)$,
then we say that $f$ is \textit{$k$-synchronized}.
The following result is very useful for proving
sequences not automatic.
\begin{theorem}
If $f$
is $k$-synchronized, then
\begin{itemize}
    \item[(a)] $f = \mathcal{O}(n)$;
    \item[(b)] If $f=o(n)$ then $f(n) = \mathcal{O}(1)$.
    \item[(c)] If there is an increasing subsequence
    $n_1 < n_2<\cdots$ such that
    $lim_{i \rightarrow\infty} f(n_i)/n_i = 0$,
    then there is a constant $C$ such that
    $f(n) = C$ for infinitely many $n$.
\end{itemize}
\label{synch1}
\end{theorem}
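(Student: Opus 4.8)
The plan is to argue throughout from the synchronizing automaton together with the pigeonhole/pumping principle. Fix the convention that base-$k$ representations are written most-significant-digit first and that the \emph{convolution} of a pair $(n,y)$ is formed by padding the shorter of the two canonical representations on its high end with a fresh symbol $\square$, so that the two tracks have equal length; $k$-synchronization then provides a DFA $M$, with state set $Q$ of size $m:=|Q|$, whose language $L(M)$ is exactly the set of convolutions of the pairs $(n,f(n))$. Write $\ell(n)$ for the number of base-$k$ digits of $n$. The fact used repeatedly is that a word obtained from a member of $L(M)$ by pumping a loop still lies in $L(M)$, and hence still decodes to a genuine pair in the graph of $f$; since for a given $n$ there is a unique accepted convolution, functionality of $f$ will supply the contradictions.

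For (a) I would first prove the estimate $\ell(f(n)) \le \ell(n)+m$. Suppose instead $\ell(f(n)) > \ell(n)+m$ for some $n$. Then the top columns of the convolution of $(n,f(n))$, in which the $n$-track reads only $\square$, number more than $m$, so some state recurs while $M$ reads this all-padding block, giving a loop confined to it. Pumping that loop \emph{up} inserts only $\square$'s on the $n$-track, so the decoded value on the $n$-track is unchanged, while it inserts at least one genuine digit into $f(n)$, so the decoded value $y$ has $\ell(y) > \ell(f(n))$ and thus $y \ne f(n)$; as the pumped word lies in $L(M)$ this contradicts functionality of $f$. From $\ell(f(n)) \le \ell(n)+m$ one gets $f(n) < k^{\ell(n)+m} \le k^{m+1}n$, that is, $f=\mathcal{O}(n)$.

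For (b) I would assume, for a contradiction, that $f=o(n)$ while $f$ is unbounded, and pump at the \emph{low} end. Unboundedness lets me choose $n$ with $f(n)=:v$ having more than $m$ digits, and $f=o(n)$ then forces $n$ itself to be large (were $n$ bounded, $f(n)$ would be too), so $\ell(n)>m$ as well; hence the lowest $m$ columns of the convolution carry genuine digits on \emph{both} tracks, and the states visited there cannot all be distinct, yielding a loop of some width $p\ge1$ confined to this both-digit block. Pumping it up to $i$ copies inserts $p$ low-order digits into each of $n$ and $v$ simultaneously, leaving their leading digits untouched, so the pumped words decode to pairs $(n^{(i)},v^{(i)})\in L(M)$ with $\ell(n^{(i)})=\ell(n)+(i-1)p$ and $\ell(v^{(i)})=\ell(v)+(i-1)p$. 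Then $v^{(i)}/n^{(i)} > k^{\ell(v)-\ell(n)-1}$, a fixed positive constant, while $n^{(i)}\to\infty$, contradicting $f=o(n)$. Hence $f=o(n)$ forces $f$ to be bounded, which for an $\Enn$-valued function is exactly $f=\mathcal{O}(1)$.

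Part (c) refines this to a single subsequence, and here lies the one genuinely new idea. If $\{f(n_i)\}$ is bounded we are done at once by pigeonhole, so I assume $v_i:=f(n_i)\to\infty$; combined with $f(n_i)/n_i\to0$ this makes the top padding block $P_i$ of the convolution of $(n_i,v_i)$ --- the columns in which the $f$-track reads $\square$ --- have length $g_i\to\infty$. Reading $P_i$ lands $M$ in a state $q_i$, and since $|Q|=m$ is finite some state $q^*$ recurs for an infinite set $I$ of indices. The trick is then to splice: fixing once and for all a shortest word $\sigma$ driving $q^*$ into an accepting state (so $\sigma$ has fewer than $m$ columns), the word $P_i\sigma$ is accepted for every $i\in I$ and so decodes to a pair $(N_i,f(N_i))$ whose $f$-value uses at most the few $f$-digits of $\sigma$, whence $f(N_i)<k^m$, while $N_i$ retains the whole block $P_i$ and so has at least $g_i\to\infty$ digits. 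Thus $(N_i)_{i\in I}$ takes infinitely many values while $f(N_i)$ stays in the finite set $\{0,1,\dots,k^m-1\}$, so some value $C$ is attained infinitely often. The main obstacle in all three parts is bookkeeping rather than ideas: one must choose the alignment and the location of the pumped loop so that precisely the intended quantity is disturbed --- $n$ alone in (a), both tracks in a fixed ratio in (b), and the $f$-value alone in (c) --- and then verify that the altered word is still a member of $L(M)$, so that it genuinely encodes a pair $(N,f(N))$.
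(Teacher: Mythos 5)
The paper does not actually prove this theorem: it states it and defers entirely to \cite{Carpi-Maggi} and \cite{Shallit:2021}, where the results are obtained by pumping arguments on the synchronizing automaton. Your proof is correct and is essentially that same standard argument: in each part you place the loop (or, in (c), the spliced suffix $\sigma$ with $|\sigma|<m$) so that the modified word still lies in $L(M)$, and then functionality of $f$ --- the uniqueness of the accepted convolution with a given first track --- delivers the contradiction in (a) and (b) and the pigeonhole conclusion in (c); the bookkeeping about where padding sits, preservation of leading digits, and the fact that membership in $L(M)$ automatically guarantees a valid convolution is all handled soundly.
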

For proofs, see \cite{Carpi-Maggi,Shallit:2021}.

Many functions dealing with $k$-automatic sequences are
$k$-synchronized.   A fairly detailed list is contained in
\cite{Shallit:2021} and includes such quantities as
\begin{itemize}
    \item appearance (length of shortest prefix containing all length-$n$ blocks); \cite{Charlier&Rampersad&Shallit:2012}
    \item repetitivity index (minimum distance between two consecutive occurrences of a length-$n$ block); \cite{Carpi&DAlonzo:2009}
    \item the uniform recurrence function (maximum distance between two consecutive occurrences of a length-$n$ block); \cite{Charlier&Rampersad&Shallit:2012}
    \item condensation (length of the shortest block containing all length-$n$ blocks); \cite{Goc&Henshall&Shallit:2013}
    \item separator length (length of the shortest word beginning at position $n$ not appearing previously in the sequence); \cite{Garel:1997,Carpi-Maggi}
    \item palindrome separation (longest distance between two consecutive length-$n$ blocks, both of which are palindromes); \cite{Shallit:2021}
    \item repetition word length (for each $n$, the length of the shortest prefix $w$ of ${\bf x}[n..\infty]$ for which either $w$ is a suffix of ${\bf x}[0..n-1]$ or vice versa); \cite{Mignosi&Restivo:2012}
    \item largest square centered at a given position; \cite{Shallit:2021}
    \item shortest square beginning at a given position; \cite{Shallit:2021}
    \item longest palindromic suffix of a length-$n$ prefix; \cite{BlondinMasse&Brlek&Garon&Labbe:2008}
    \item the Bugeaud-Kim function (the length of the shortest prefix of $\bf x$ containing two possibly overlapping occurrences of some length-$n$ block);
    \cite{Bugeaud&Kim:2018}
    \item block complexity; \cite{Goc} and
    \item first occurrence of a run of length $\geq n$.  \cite{Shallit:2021}
\end{itemize}
As a consequence, we immediately get that if $\bf x$ is a $k$-automatic sequence, then the bounds in Theorem~\ref{synch1} hold.   This gives a method for proving some sequences non-automatic.   Let us consider some examples.

\begin{example}
A {\it run} in a sequence is a block of consecutive identical values.
Schlage-Puchta proved the following lemma about runs in automatic 
sequences:  if an automatic sequence $(a_n)_{n \geq 0}$ has 
arbitrarily long runs, then there exists a constant $c > 0$ such 
that $a_n = a$ for $n \in [x, (1+c)x]$ and infinitely many $x$.  
See \cite{Schlage-Puchta}. With this lemma he was able to prove that
the sequence $(\mu(n) \bmod p)_{n \geq 1}$ is not 
automatic, for all primes $p$.   Schlage-Puchta's lemma immediately follows  from Theorem~\ref{synch1} and the 
observation that the function $f$ mapping $n$ to the first position 
$m$ where $a_m = a_{m+1} = \cdots = a_{m+n-1}$ is $k$-synchronized.  
\end{example}

Even further, the $O(n)$ upper bound on the growth rate of automatic sequences applies to sequences defined over many other kinds of numeration
systems, such as Fibonacci numeration, Tribonacci numeration, and so forth.   

\begin{example}
Consider the fixed point $\bf v$ of the morphism
$a \to aab$, $b \to b$.   The function mapping $n$ to the starting position
of the first occurrence of a run of length $n$ in a $k$-automatic sequence is
$k$-synchronized and hence must be in $\mathcal{O}(n)$.  But the the first occurrence of run of length $n$ in
$\bf v$ begins at position $2^{n+1}-n-1$, which is clearly not in $\mathcal{O}(n)$.
Hence $\bf v$ is not $k$-automatic for any $k$.  By the remark above, $\bf v$
cannot be `automatic' in any numeration system at all (e.g., 
Fibonacci, Ostrowski, etc.), provided the numeration system has certain
properties.
\end{example}

\section{Block complexity}

As we saw in the last section, the (block-)complexity (aka factor complexity, 
aka subword complexity) of a sequence ${\mathbf a}$ is the 
function 
$p_{\mathbf a}(n)$ counting the number of distinct blocks of 
length $n$ that occur in ${\mathbf a}$. Hence the block complexity 
of an automatic sequence is in ${\mathcal O}(n)$ (see, e.g.,
\cite[Thm.~2]{Cobham72}). Thus we have
\begin{theorem}\label{complexity}
If the (block-)complexity of a sequence taking its values in a 
finite alphabet is not in ${\mathcal O}(n)$, then the sequence 
cannot be automatic. If the appearance function of a sequence
taking its values in a finite alphabet is not in 
${\mathcal O}(n)$, then the sequence cannot be automatic.
\end{theorem}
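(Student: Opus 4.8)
The plan is to invoke the two facts already established in the excerpt about automatic sequences, namely that their block complexity and their appearance function are both in $\mathcal{O}(n)$, and then argue by contraposition. For the first claim, I would start by recalling the definition of block complexity $p_{\mathbf{a}}(n)$ as the number of distinct length-$n$ blocks occurring in $\mathbf{a}$, and cite the fact (noted just above the statement, via \cite[Thm.~2]{Cobham72}) that every automatic sequence has $p_{\mathbf{a}}(n) = \mathcal{O}(n)$. The contrapositive is then immediate: if a sequence over a finite alphabet has block complexity \emph{not} in $\mathcal{O}(n)$, it cannot be automatic, since automaticity would force the $\mathcal{O}(n)$ bound. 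The finiteness of the alphabet is needed only to ensure $p_{\mathbf{a}}(n)$ is finite for each $n$ so that the statement is meaningful.

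For the second claim, concerning the appearance function, I would connect it to the synchronization machinery of the previous section. The appearance function---the length of the shortest prefix containing all length-$n$ blocks---is listed in the excerpt as a $k$-synchronized quantity (via \cite{Charlier&Rampersad&Shallit:2012}). By Theorem~\ref{synch1}(a), any $k$-synchronized function is in $\mathcal{O}(n)$. Hence if $\mathbf{a}$ is $k$-automatic, its appearance function is $k$-synchronized and therefore $\mathcal{O}(n)$; contrapositively, an appearance function not in $\mathcal{O}(n)$ rules out $k$-automaticity for that particular $k$, and ranging over all $k \geq 2$ rules out automaticity altogether.

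Both parts are essentially one-line deductions from results stated earlier, so there is no genuine combinatorial obstacle here; the theorem is a packaging of the growth bounds into a convenient non-automaticity test. The only point requiring mild care is the quantifier over $q$ (or $k$): the $\mathcal{O}(n)$ bound must be invoked uniformly, observing that a single super-linear lower bound on complexity or appearance simultaneously contradicts the $\mathcal{O}(n)$ consequence for \emph{every} base $q \geq 2$, since each such base would independently force linear growth. I expect the write-up to be very short---a sentence recalling the $\mathcal{O}(n)$ bound for each of the two functions, followed by the contrapositive---and the ``hard part,'' if any, is merely making explicit that the block-complexity bound and the appearance-function bound are genuinely distinct criteria (a sequence could fail one without obviously failing the other), which justifies stating both halves separately rather than as a single assertion.
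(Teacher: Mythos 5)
Your proposal is correct and matches the paper's own (implicit) argument: the paper derives the theorem exactly as you do, by citing Cobham's $\mathcal{O}(n)$ bound on the block complexity of automatic sequences \cite[Thm.~2]{Cobham72} and, for the appearance function, the fact from the preceding section that it is $k$-synchronized and hence $\mathcal{O}(n)$ by Theorem~\ref{synch1}(a), with the theorem then following by contraposition. No gaps; your extra care about quantifying over the base $q$ is sound, though strictly one only needs the bound for the single hypothetical base of automaticity.
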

This is one of the most useful criteria for proving
non-automaticity.

\begin{example}\label{ex-complexity}
Theorem~\ref{complexity} was used in various contexts.

\begin{itemize}

\item A first example, in the more general context of 
$2D$-sequences, is the study of complexity of the Pascal 
triangle modulo $d$: in this case the complexity is the 
rectangle-complexity; $p(u,v)$ is the number of different 
rectangles of size ($u \times v$). It was proved in 
\cite{allouche-berthe} that, for $d \geq 2$, the complexity 
of the sequence $({m \choose n} \pmod d)_{m, n \geq 0}$ is 
$\Theta(\max(u,v)^{2 \omega(d)})$, where $\omega(d)$ is the
number of distinct prime divisors of $d$. In particular this
double sequence is not $q$-automatic for any $q \geq 2$ if $q$ is not a prime power. (This result was generalized to
linear cellular automata by Berth\'e \cite{Berthe}.)

\item If ${\mathbf t} = (t_n)_{n \geq 0}$ is the Thue-Morse 
sequence (defined, e.g., as the fixed point of the morphism 
$0 \to 01$, $1 \to 10$) and $H$ is a polynomial with rational 
coefficients sending the integers to the integers and such 
that $\deg H \geq 2$, then the block complexity of the sequence
$(t_{H(n)})_{n \geq 0}$ grows exponentially
\cite[Corollary~3]{Moshe}, which proves that this sequence 
is not $q$-automatic for any $q \geq 2$ (this was proved only 
for $q = 2$ in \cite{allouche82}). Furthermore it is proved 
in \cite{DMR} that the sequence $(t_{n^2})_{n \geq 0}$ is
normal, and, more generally, in \cite{Mullner} that the sequences
$(u_{n^2} \bmod d)_{n \geq 0}$ are normal, where 
$(u_n)_{n \geq 0}$ is a digital sequence in base $q$ in the 
sense of \cite{Cateland} and $d$ an integer prime to $q-1$
and to $\gcd\{u_n, \ n \in {\mathbb N}\}$.

\item Another example is \cite{Firicel1, Firicel2}, where it is
proved that the fixed point of the morphism $a \to aab$, 
$b \to b$ is not $q$-automatic for any $q \geq 2$, by proving
that its block-complexity is in $\Theta(n^2)$. 

\item The following result was proved in \cite{Hu}: 
if ${\mathbf u} = (u_n)_{n \geq 1}$ is a completely 
multiplicative sequence (i.e., $u_{mn} = u_m u_n$ 
for all $m, n \geq 1$), taking finitely many values in 
a field $K$, and if the number of primes $p$ such that 
$u(p) \neq 1_K$ is finite, then the subword complexity 
of ${\mathbf u}$ is $\Theta(n^t)$ where $t$ is the
number of primes $p$ such that $u(p) \notin \{0_K, 1_K\}$.
An example of application is that the sequence
$((-1)^{\nu_2(n) + \nu_3(n)})_{n \geq 1}$ is not $q$-automatic
for any $q$, where $\nu_p(n)$ is the highest exponent $j$ such
that $p^j \, \mid \, n$: its complexity is in $\Theta(n^2)$.

\item The last two, spectacular, examples that we give in this
section are results from \cite{ABL, AB} and \cite{Bugeaud}: 

\begin{itemize}
\item \cite{ABL, AB} The sequence of digits of an algebraic 
irrational real in base $b \geq 2$ is not $q$-automatic for 
any $q \geq 2$. More generally the main result of 
\cite{ABL, AB} reads: 
{\it The complexity of the $b$-ary expansion of every 
irrational algebraic number satisfies the property
$\liminf_{n \to \infty} p(n)/n = + \infty$.} 

\item \cite{Bugeaud} (also see \cite{AB2}) Let $\alpha$ be a 
positive real which is algebraic over ${\mathbb Q}$ of degree 
at least $3$, and such that its continued fraction expansion 
has finitely many distinct partial quotients. Then this 
sequence of partial quotients is not $q$-automatic for any 
$q \geq 2$. More generally the main result of \cite{Bugeaud} 
reads: {\it If the sequence of partial quotients of a positive 
algebraic real of degree $\geq 3$ takes finitely many distinct 
values, then the (block-)complexity of this sequence of partial
quotients satisfies the property 
$\liminf_{n \to \infty} p(n)/n = + \infty$.} 

\end{itemize}

\end{itemize}

\end{example}

\section{Gaps and runs}\label{gaps-runs}

Cobham \cite{Cobham72} proved the following useful result about 
gaps in automatic sequences. (A similar result was found 
independently by Minsky and Papert \cite{Minsky-Papert}.)
\begin{theorem}
Let ${\bf x} = (x(n))_{n \geq 0}$ be a $k$-automatic sequence 
over $\Delta$. Let $d \in \Delta$. Define $\alpha_j$ to be the 
position of the $j$'th occurrence of $d$ in ${\mathbf x}$. 
(More formally, if $|{\bf x}[0..t-1]|_d = j-1$ and 
${\bf x}[t] = d$, then $\alpha_j = t$.) 
Then either
$$
\limsup_{n \rightarrow \infty} 
{{|{\bf x}[0..n-1]|_d} \over {\log n}} < \infty
\ \ \ \text{or} \ \ \
\liminf_{j \rightarrow \infty} \alpha_{j+1} - \alpha_j < \infty 
\ \ \ \text{(or both)}.
$$
\label{gaps}
\end{theorem}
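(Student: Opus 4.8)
The plan is to work with the set $S_d = \{n : x(n) = d\}$, whose characteristic sequence is $k$-automatic, and to read it by a DFA $M = (Q, \{0,\dots,k-1\}, \delta, q_0, F)$ most-significant-digit first, normalized so that $\delta(q_0,0) = q_0$ (prepending zeros does not change the value, so this is harmless). Writing $A(m)$ for the number of accepted words of length $m$, we have $A(m) = \#(S_d \cap [0,k^m)) = |{\bf x}[0..k^m-1]|_d$, and since $|{\bf x}[0..n-1]|_d$ is nondecreasing in $n$, the first alternative of the theorem is equivalent to $A(m) = O(m)$. The second alternative is equivalent to the existence of a constant $g$ and infinitely many $n \in S_d$ having a further element of $S_d$ in $(n, n+g]$. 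I will obtain the dichotomy by a single structural case distinction on $M$.

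Call a state $q \in Q$ \emph{high} if it is reached from $q_0$ by words of arbitrarily large value, and \emph{low} otherwise; for a low state the values of the words reaching it are bounded by some $B_q$, and I set $B = \max_q B_q$ over the low states. The case distinction is whether or not some high state $q$ admits two distinct words $\rho \neq \rho'$ of a common length $\ell$ with $\delta(q,\rho), \delta(q,\rho') \in F$. Suppose first that such $q, \rho, \rho'$ exist. Since $q$ is high I may choose words $\pi$ reaching $q$ whose value tends to infinity; then $\pi\rho$ and $\pi\rho'$ are both accepted, have the same length, and agree in all but their lowest $\ell$ digits, so they represent two elements of $S_d$ lying within $k^\ell$ of one another, at positions tending to infinity. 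Taking $g = k^\ell$ then yields infinitely many gaps of size at most $g$, which is exactly the second alternative.

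Now suppose instead that no high state has two accepted continuations of equal length. I claim that every accepted word of length $m$ is then determined by a bounded amount of data. Indeed, reading such a word and letting $j^*$ be the first position at which the run is in a high state, the prefix of length $j^*-1$ reaches a low state and hence has value at most $B$; as a word of fixed length it is one of at most $B+1$ possibilities, and the next letter offers at most $k$ choices. From the high state reached at step $j^*$, the remaining suffix is an accepted continuation of a determined length, hence \emph{unique} by the case hypothesis. Summing over the at most $m+1$ possible values of $j^*$ (and the finitely many words that stay low throughout) gives $A(m) \le (B+1)k\,m + (B+1) = O(m)$, which is the first alternative.

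The step I expect to require the most care is precisely the low-density regime, where $S_d$ has super-logarithmically many elements below $k^m$ yet vanishing density: here a naive pigeonhole on residues or on short blocks fails, because two elements of $S_d$ need not fall into a common short interval. The genuine content is the manufacture, from a high state with branching, of two accepted words that differ only inside a bounded low-order window while a high-order feature carries their common positions to infinity; dually, the delicate point in the opposite case is the determinacy of the suffix once a high state is reached, together with the constant bound $B+1$ on the low prefixes of each fixed length. Checking that the property of being high is closed under taking successors (so that $j^*$ is well defined and the tail indeed stays high) is a routine but necessary verification.
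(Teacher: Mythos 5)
The paper itself gives no proof of Theorem~\ref{gaps}: it is quoted from Cobham \cite{Cobham72}, with the remark that Minsky and Papert \cite{Minsky-Papert} found it independently. So your argument must stand on its own, and it does: it is a correct, self-contained automaton-theoretic proof, in the same spirit as the original sources but with its own organization. Your dichotomy on the leading-zero-normalized DFA for $S_d=\{n : x(n)=d\}$ --- every state is ``low'' (reached only by words of bounded value) or ``high'', and highness is preserved by transitions --- is sound, and both halves check out. If some high state has two distinct accepting continuations $\rho\neq\rho'$ of a common length $\ell$, then since distinct words of equal length have distinct values, appending them to reaching words of unbounded value yields pairs of distinct elements of $S_d$ within $k^\ell$ of each other at positions tending to infinity, so $\liminf_j(\alpha_{j+1}-\alpha_j)\le k^\ell$. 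If no high state branches in this way, then an accepted word of length $m$ is determined by the first position $j^*$ at which its run is high, its low prefix (at most $B+1$ choices, again because equal-length words have distinct values, summed over all low states), and one digit, after which the suffix is unique; hence $A(m)=O(m)$, which by monotonicity of the counting function is exactly $|{\bf x}[0..n-1]|_d=O(\log n)$. Three small points to tidy, none a gap: (i) justify the normalization $\delta(q_0,0)=q_0$ (take the minimal DFA of the language $\{w : [w]_k\in S_d\}$, which is closed under prepending zeros, so $q_0$ and $\delta(q_0,0)$ are Nerode-equivalent); (ii) dispose explicitly of the case where $q_0$ itself is high, since then every reachable state is high and your hypothesis gives $A(m)\le 1$ outright, with no low prefixes to count; (iii) your final constant is slightly off --- summing over $j^*\in\{1,\dots,m\}$ gives $(B+1)km+(B+1)$, while admitting $j^*=0$ as you state would give $(B+1)k(m+1)+(B+1)$ --- but this is immaterial for the $O(m)$ conclusion. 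Compared with the paper's citation, your route buys an explicit elementary proof with concrete constants ($g=k^\ell$ and the linear bound on $A(m)$), at the cost of length; it also makes visible why both alternatives can hold simultaneously, since your Case 2 can hold even for sets (such as $\{2^n\}\cup\{2^n-1\}$, the paper's own example) that also have bounded gaps infinitely often.
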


\begin{remark}
It is possible for both alternatives to hold. For example, 
consider the characteristic sequence of the set
$\{ 2^n \suchthat n \geq 1 \} \, \cup \, \{2^n - 1 \suchthat n \geq 1 \}$,
and $d = 1$.
\end{remark}
As an application let us prove the following:
\begin{corollary}
Let $p$ be a polynomial with rational coefficients such that
$p({\mathbb N}) \subseteq {\mathbb N}$. Then the characteristic 
sequence ${\mathbf c}$ of the set $\{ p(i) \suchthat i \geq 0 \}$ 
is $k$-automatic if and only if $\deg p < 2$.
\end{corollary}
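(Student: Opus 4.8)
The plan is to establish the equivalence by treating the two directions separately, with the reverse implication resting entirely on Cobham's gap theorem (Theorem~\ref{gaps}).

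For the direction ``$\deg p < 2 \Rightarrow \mathbf{c}$ automatic'' I would argue directly. A constant $p$ makes $\{p(i) : i \geq 0\}$ a single point, so $\mathbf{c}$ is eventually $0$ and $k$-automatic for every $k$. A linear $p(i) = ai+b$ must, because $p(\mathbb{N}) \subseteq \mathbb{N}$, have $b = p(0) \in \mathbb{N}$ and $a = p(1) - p(0)$ a positive integer; then $\{p(i)\}$ is an arithmetic progression, so $\mathbf{c}$ is eventually periodic (with period $a$) and hence $k$-automatic for all $k$.

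For the converse I would set $e := \deg p \geq 2$ and apply Theorem~\ref{gaps} with $d = 1$, with the goal of showing that \emph{neither} of its two alternatives can hold --- which is impossible for a $k$-automatic sequence, and so forces non-automaticity. First I would note that $p(\mathbb{N})$ is bounded below, so the leading coefficient of $p$ is positive; hence $p$ is strictly increasing for all large $i$, its values are eventually distinct, and for large $j$ the $j$-th occurrence of $1$ sits at $\alpha_j = p(i)$ for the appropriate $i$. The consecutive gaps then satisfy $\alpha_{j+1} - \alpha_j = p(i+1) - p(i)$, a polynomial in $i$ of degree $e - 1 \geq 1$, which tends to $\infty$; thus $\liminf_j(\alpha_{j+1} - \alpha_j) = \infty$, and the second alternative fails.

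To refute the first alternative I would count occurrences: $|\mathbf{c}[0..n-1]|_1$ equals the number of $i$ with $p(i) < n$, and since $p(i) \sim c\,i^e$ with $c > 0$ this count is $\Theta(n^{1/e})$. As $e$ is finite we have $n^{1/e}/\log n \to \infty$, whence $\limsup_n |\mathbf{c}[0..n-1]|_1/\log n = \infty$ and the first alternative fails as well; both alternatives being contradicted, $\mathbf{c}$ is not $k$-automatic for any $k$. The main obstacle I anticipate is purely technical rather than conceptual: confirming the $\Theta(n^{1/e})$ growth of the counting function and absorbing the finitely many small indices $i$ where $p$ need not be monotone or injective. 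Once that bookkeeping is in place, the dichotomy of Theorem~\ref{gaps} is seen to fail on both sides and the argument closes.
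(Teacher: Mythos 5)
Your proof is correct and follows essentially the same route as the paper: handle $\deg p < 2$ via ultimate periodicity, and for $\deg p \geq 2$ apply Theorem~\ref{gaps} with $d=1$, noting that the gaps $p(i+1)-p(i) \to \infty$ while the counting function grows like $\Theta(n^{1/\deg p})$, contradicting the dichotomy. Your only departures are cosmetic --- refuting both alternatives directly rather than letting the failed second alternative force the first, and spelling out the monotonicity/positivity bookkeeping the paper leaves implicit.
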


\begin{proof}
If $\deg p < 2$, then this characteristic sequence is ultimately
periodic, and hence $k$-automatic. Otherwise assume $\deg p \geq 2$,
and $\bf c$ is $k$-automatic. Take $d = 1$ in Theorem~\ref{gaps}.  
We have $\alpha_{j+1} - \alpha_j = p(j'+1) - p(j')$ for 
$j' = j + c$, and $j$ sufficiently large and $c$ a constant. But 
this difference is a polynomial of degree $(\deg p) - 1$ and 
hence goes to $\infty$ as $j$ gets large, so the theorem tells 
us that
$$
\limsup_{n \rightarrow \infty} 
{{|{\bf x}[0..n-1]|_d} \over {\log n}} < \infty .
$$
But $|{\bf x}[0..n-1]|_d = \Theta(n^{1/s})$, where $s = \deg p$,
a contradiction.   Hence $\bf c$ cannot be $k$-automatic.
\end{proof}

\bigskip

\noindent
Let us give another application found in \cite{KLR}, which in
particular answers a question in \cite{ASS}.

\begin{corollary}[K\"arki--Lacroix--Rigo]
Let $r$ be an integer $\geq 1$. Let $F$ be the set of maps
$\{\varphi_0, \varphi_1, \ldots, \varphi_r\}$ with 
$\varphi_0(x) = x$ and there exist $k_i, \ell_i \in {\mathbb Z}$, 
with $2 \leq k_1 \leq k_2 \leq \ldots k_r$ such that 
$\varphi_i(x) = k_i x + \ell_i$ for each $i \in [1, r]$.
Let $F(S)$ be defined for any set of integers $S$ by 
$F(s) := \{\varphi(s), s \in S, \varphi \in F \}$.
Let $I$ be any finite set of integers. Define $F^0(I) = I$, and
$F^{m+1}(I) := F(F^m(I))$ for $m \geq 0$. 
Let $X := \cup_{m \geq 0} F^m(I)$ and suppose that 
$X \subset {\mathbb N}$. If $\sum_{1 \leq t \leq r} k_t^{-1} < 1$
and if there exist $i, j$ such that $k_i$ and $k_j$ are 
multiplicatively independent, then the characteristic sequence
of $X$ is not $k$-automatic for any $k \geq 2$.
\end{corollary}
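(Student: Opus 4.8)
The plan is to assume that the characteristic sequence $\mathbf c$ of $X$ is $k$-automatic and to derive a contradiction from Cobham's gap theorem (Theorem~\ref{gaps}), applied with alphabet $\{0,1\}$ and $d = 1$, so that the positions $\alpha_j$ are exactly the elements of $X$ listed in increasing order and $|\mathbf c[0..n-1]|_1 = |X \cap [0,n)|$. It then suffices to show that \emph{both} alternatives of Theorem~\ref{gaps} fail: that the counting function $n \mapsto |X \cap [0,n)|$ is not $\mathcal{O}(\log n)$, and that $\liminf_j (\alpha_{j+1}-\alpha_j) = \infty$. Each of the two hypotheses of the corollary powers one of these steps.

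First I would refute the alternative $\limsup_n |X \cap [0,n)|/\log n < \infty$ using the multiplicative independence of $k_i$ and $k_j$. Fixing a seed $a_0 \in I$, consider the elements $\varphi_i^{\,p}\varphi_j^{\,q}(a_0)$ for $p,q \geq 0$; each has leading term $k_i^{\,p} k_j^{\,q} a_0$, and the condition $p \log k_i + q \log k_j \leq \log n - \mathcal{O}(1)$ places it in $[0,n)$. There are $\asymp (\log n)^2$ such lattice points $(p,q)$, and by multiplicative independence the leading terms $k_i^{\,p} k_j^{\,q}$ are pairwise distinct (since $k_i^{\,p} k_j^{\,q} = k_i^{\,p'} k_j^{\,q'}$ forces $(p,q)=(p',q')$). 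With a little care controlling the bounded additive corrections produced by the $\ell_t$ (when $\ell_t \equiv 0$ the values are exactly $k_i^{\,p}k_j^{\,q}a_0$ and the point is immediate), this yields $\gg (\log n)^2$ distinct elements of $X$ below $n$, and since $(\log n)^2 \neq \mathcal{O}(\log n)$ the first alternative is excluded. Multiplicative independence is essential here: for commensurable $k_i,k_j$ the two-parameter family collapses to one parameter and the count drops to $\mathcal{O}(\log n)$ (indeed $\{2^n\}$, which \emph{is} automatic, arises in this degenerate way).

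Next I would attack the alternative $\liminf_j(\alpha_{j+1}-\alpha_j) < \infty$, i.e., I would prove that the gaps in $X$ tend to infinity, using $\sum_t k_t^{-1} < 1$. The structural identity $X \setminus I = \bigcup_{t=1}^r \varphi_t(X)$ shows that every large element of $X$ is of the form $\varphi_t(y)$ with $t \geq 1$ and $y \in X$. If two consecutive elements can be written $x = \varphi_t(y)$, $x' = \varphi_t(y')$ through the \emph{same} generator, then $x'-x = k_t(y'-y)$ with $y,y'$ again consecutive in $X$ (any intervening element of $X$ would produce one between $x$ and $x'$); since $k_t \geq 2$ this reduces a small gap at scale $T$ to a strictly smaller gap at scale $T/k_t$, a descent that cannot persist below $1$. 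The sparseness hypothesis enters through the companion bound $|X \cap [0,T)| = \mathcal{O}(T^{s})$, obtained from $|X \cap [0,T)| \leq |I| + \sum_t |X \cap [0,T/k_t + \mathcal{O}(1))|$, where $s$ is defined by $\sum_t k_t^{-s} = 1$ and satisfies $s<1$ precisely because $\sum_t k_t^{-1} < 1$; thus $X$ has density zero.

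The hard part will be the \emph{different}-generator case of the gap estimate: two consecutive elements $x = \varphi_t(y)$, $x' = \varphi_{t'}(y')$ with $t \neq t'$ live at incommensurable scales $T/k_t$ and $T/k_{t'}$, so the same-generator descent does not apply, and a bounded gap forces a near-coincidence $k_{t'}y' \approx k_t y$ between elements of $X$ at different scales. Ruling out, for each fixed bound $M$, all but finitely many such near-coincidences is the Diophantine heart of the proof, and it is exactly here that multiplicative independence must be combined with the sparseness of $X$; for the special case $\ell_t \equiv 0$, $I=\{1\}$, the set $X$ is the set of $\{k_i,k_j\}$-smooth numbers and this step is precisely the finiteness of bounded-gap pairs of smooth numbers, which suggests that genuine number-theoretic input (in the spirit of $S$-unit equations) may be unavoidable. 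Once both alternatives of Theorem~\ref{gaps} have been excluded, the assumption that $\mathbf c$ is $k$-automatic is contradicted, and the corollary follows.
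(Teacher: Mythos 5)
Your overall strategy cannot be completed, because the step you correctly flag as the ``Diophantine heart'' --- proving $\liminf_j(\alpha_{j+1}-\alpha_j)=\infty$ --- is not merely hard: it is \emph{false} under the hypotheses of the corollary. For any $x\in X$ and any two indices $s\neq t$ in $[1,r]$, both $\varphi_s\varphi_t(x)$ and $\varphi_t\varphi_s(x)$ lie in $X$, and their difference
$$
\varphi_s\varphi_t(x)-\varphi_t\varphi_s(x)=(k_s-1)\ell_t-(k_t-1)\ell_s
$$
is a constant independent of $x$. Unless all the maps share a common fixed point (e.g., unless all $\ell_t=0$), this constant $D$ is nonzero for some pair, and then $X$ contains infinitely many pairs of elements at distance exactly $|D|$, so $\liminf_j(\alpha_{j+1}-\alpha_j)\leq |D|<\infty$. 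Concretely, for $F=\{x,\,2x,\,3x+1\}$ and $I=\{1\}$ (which satisfies $\tfrac12+\tfrac13<1$ and multiplicative independence of $2$ and $3$), every $x\in X$ gives $6x+1=\varphi_2(\varphi_1(x))\in X$ and $6x+2=\varphi_1(\varphi_2(x))\in X$, so gaps of size $1$ occur infinitely often. Thus the second alternative of Theorem~\ref{gaps} genuinely \emph{holds} for these sets, that theorem alone yields no contradiction, and no $S$-unit machinery can rescue the plan. (A smaller, repairable issue: in your $(\log n)^2$ count the additive corrections are not bounded --- they are of the same order as the leading term $k_i^pk_j^qa_0$ --- so distinctness of the values needs a separate argument.)

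The proof that works (it is the one in \cite{KLR}, and the reason this corollary sits in the same section as Theorem~\ref{minsky-papert}) replaces ``gaps tend to infinity'' by two facts that are compatible with bounded gaps. First, $\sum_t k_t^{-1}<1$ gives the recursion $f(n)\leq |I|+\sum_t f\bigl(n/k_t+O(1)\bigr)$ for $f(n):=\#(X\cap[0,n])$, whence $f(n)=O(n^s)$ for some $s<1$; so $1$ occurs in the characteristic sequence with frequency zero. Second, multiplicative independence of $k_i,k_j$ implies that the logarithms of the elements $\varphi_i^p\varphi_j^q(a_0)$ have the form $p\log k_i+q\log k_j+C+o(1)$, and since $\log k_i/\log k_j$ is irrational the gaps of the additive semigroup $\{p\log k_i+q\log k_j\}$ tend to $0$; hence every interval $[y,(1+\varepsilon)y]$ meets $X$ for large $y$, i.e., writing $X=\{x_1<x_2<\cdots\}$, one has $\lim_n x_{n+1}/x_n=1$. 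These two facts together contradict Theorem~\ref{minsky-papert} (Minsky--Papert, or Cobham's sharper Theorem~12): if the characteristic sequence of $X$ were $k$-automatic, a value occurring infinitely often with frequency zero would force $\limsup_n x_{n+1}/x_n>1$. Your first step is, in essence, a cousin of this ratio-density fact; the decisive change is to switch from the counting-versus-gaps dichotomy of Theorem~\ref{gaps} to the frequency-versus-ratio theorem, which is immune to the bounded gaps that these sets actually possess.
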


Another result about gaps is the following theorem 
\cite{Minsky-Papert}.

\begin{theorem}[Minsky-Papert] \label{minsky-papert}
Let ${\mathbf a}$ be a $k$-automatic sequence. Let $a$ be a value
occurring in ${\mathbf a}$ infinitely often. Suppose that the
frequency of occurrences of $a$ in ${\mathbf a}$ is zero. Then,
letting $\alpha_j$ denote the index of $j$-th occurrence of $a$,
one has $\limsup \alpha_{j+1} / \alpha_j > 1$.
\end{theorem}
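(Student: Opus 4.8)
\medskip
The plan is to argue by contradiction, using the gap dichotomy of Theorem~\ref{gaps} only for orientation and then leaning on the finite-automaton structure underlying ${\mathbf a}$ for the essential case. Write $S=\{n\suchthat a_n=a\}=\{\alpha_1<\alpha_2<\cdots\}$ for the (infinite) set of positions where ${\mathbf a}$ takes the value $a$, and suppose, contrary to the claim, that $\limsup_j \alpha_{j+1}/\alpha_j\le 1$. Since the $\alpha_j$ strictly increase we always have $\alpha_{j+1}/\alpha_j>1$, so this forces $\lim_j \alpha_{j+1}/\alpha_j=1$. The first step is to record what this means geometrically: for every $\varepsilon>0$ there is an $x_0$ such that every interval $(x,(1+\varepsilon)x]$ with $x\ge x_0$ meets $S$ (take the least $\alpha_{j+1}>x$; then $\alpha_{j+1}\le(1+\varepsilon)\alpha_j\le(1+\varepsilon)x$). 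In other words, $S$ is \emph{multiplicatively syndetic}. The easy half is already visible here: if $a$ occurred only $O(\log n)$ times in the prefix of length $n$ (the first alternative of Theorem~\ref{gaps}), then $\log\alpha_j\gg j$, so the averages of $\log(\alpha_{j+1}/\alpha_j)$ stay bounded below by a positive constant and $\limsup_j\alpha_{j+1}/\alpha_j>1$ outright. The real work is the opposite, superlogarithmic, regime, where Theorem~\ref{gaps} gives nothing and the zero-frequency hypothesis must be used.

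\medskip
The second step translates ``frequency zero'' into the language of the automaton. Let $M=(Q,\Sigma_k,\delta,q_0,\tau)$ be a deterministic finite automaton with output computing ${\mathbf a}$, so that $a_n=\tau(\delta(q_0,(n)_k))$, and consider the Markov chain on $Q$ whose step reads a digit of $\Sigma_k=\{0,1,\dots,k-1\}$ chosen uniformly at random. For $n$ uniform in $[0,k^\ell)$ the quantity $|{\mathbf a}[0..k^\ell-1]|_a/k^\ell$ is exactly the probability that this chain, started at $q_0$, sits in an output-$a$ state after $\ell$ steps. Finite Markov chain theory then yields the dictionary I want: the frequency of $a$ is zero if and only if no closed recurrent communicating class reachable from $q_0$ contains an output-$a$ state (a transient state contributes limiting mass $0$, whereas every state of a reachable recurrent class carries positive Ces\`aro mass). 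Thus the hypothesis supplies the conclusion that \emph{every output-$a$ state reachable from $q_0$ is transient.}

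\medskip
The final step feeds multiplicative syndeticity into this structure. Choose a string $u$, beginning with a nonzero digit and as long as we please, with $p:=\delta(q_0,u)$ in a closed recurrent class $C$ (such $u$ exist: from $q_0$ read a nonzero digit, steer into some reachable $C$, then prolong inside $C$, which is possible precisely because $C$ is recurrent). Put $|u|=t$ and $[u]_k=c\ge k^{t-1}$; the integers of length $\ell>t$ whose top $t$ digits are $u$ fill $[ck^{\ell-t},(c+1)k^{\ell-t})$, which contains the window $(x,(1+\varepsilon)x]$ for $x=ck^{\ell-t}$ and any $\varepsilon<1/c$. By multiplicative syndeticity, for $\ell$ large this window, hence the interval, contains a point of $S$: an integer $n\in S$ whose base-$k$ expansion has prefix $u$. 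Reading its digits factors the accepting path as $q_0\xrightarrow{\,u\,}p\xrightarrow{\text{rest}}q_a$ with $\tau(q_a)=a$, and since $p\in C$ and $C$ is closed, the whole tail stays in $C$, so $q_a\in C$. Thus the reachable closed recurrent class $C$ contains an output-$a$ state, contradicting the second step. Hence $\limsup_j\alpha_{j+1}/\alpha_j>1$.

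\medskip
The main obstacle, and the point where the finite-state hypothesis is indispensable, is exactly this last interplay: for a \emph{general} set, multiplicative syndeticity is perfectly compatible with density zero (the squares are the prototype, with $\alpha_{j+1}/\alpha_j\to1$), so the contradiction cannot come from counting alone. It is the rigidity of the automaton --- that a closed recurrent class cannot be escaped --- that converts ``$S$ meets every multiplicative window'' into ``$S$ sees the recurrent part,'' which zero frequency forbids. The technical points to watch are the handling of leading zeros when relating $|{\mathbf a}[0..k^\ell-1]|_a/k^\ell$ to the uniform-digit chain, and the case of a periodic recurrent class, where one extracts positive mass from a $\limsup$ over $\ell$ rather than a genuine limit.
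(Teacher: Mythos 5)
Your proof is correct, but there is nothing in the paper to compare it against: the paper states this theorem as a quoted result from \cite{Minsky-Papert} and gives no proof of it, so your argument is necessarily an independent route. And it holds up. The contradiction hypothesis, together with strict monotonicity of the $\alpha_j$, does force $\alpha_{j+1}/\alpha_j\to 1$ and hence the multiplicative syndeticity of $S=\{n : a_n=a\}$; the zero-frequency hypothesis, read along $N=k^\ell$ via the uniform-digit Markov chain on the states of a DFAO computing ${\mathbf a}$, does force every reachable output-$a$ state to be transient (the direction you need is that a reachable state lying in a closed class has positive Ces\`aro occupation mass, hence positive limsup of occupation probability, which frequency zero forbids); and the window argument correctly converts syndeticity into an element of $S$ whose base-$k$ expansion begins with a word $u$ steering $q_0$ into a closed recurrent class, which the run then never leaves, yielding the contradiction. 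You are also right that Theorem~\ref{gaps} cannot do the job by itself: its second alternative (bounded gaps infinitely often) is perfectly compatible with $\alpha_{j+1}/\alpha_j\to 1$, so the recurrent-class argument is genuinely the missing ingredient and not a repackaging of the gap dichotomy. Two minor remarks. First, the ``if and only if'' in your second step is more than you need --- only the implication ``frequency zero $\Rightarrow$ every reachable output-$a$ state is transient'' is used (though the converse is also true, since transient occupation probabilities vanish and the count over $[k^\ell, k^{\ell+1})$ controls the count over intermediate $N$). Second, the leading-zero issue you flag at the end is discharged by the standard normalization that a $k$-automatic sequence admits a DFAO giving the correct output on every representation $0^j(n)_k$; with that convention the identification of $|{\mathbf a}[0..k^\ell-1]|_a/k^\ell$ with the chain's occupation probability of output-$a$ states after $\ell$ steps is exact, and the periodicity of a recurrent class is immaterial because you only ever invoke limsups and Ces\`aro limits, never genuine limits of $\Pr[X_\ell=q]$.
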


As indicated above (second item in Example~\ref{ex-kernel2}) 
an application of this result can be found, e.g., in \cite{Yazdani}.
Note that a more precise version of Theorem~\ref{minsky-papert}
was given by Cobham in \cite[Theorem\ 12]{Cobham72}: it was used 
to prove results of non-automaticity, e.g., in 
\cite{Allouche-Thakur}.

\section{Dirichlet series}

Given an automatic sequence $(a_n)_{n \geq 0}$ with values in
the complex numbers, it is proved in \cite{AMFP} that the 
Dirichlet series $\sum a_n/(n+1)^s$ possesses a meromorphic 
continuation to the entire complex plane, and that its poles, 
if any, belong to a finite number of left semi-lattices.
This result was used in \cite{Coons} to prove that certain
arithmetic sequences are not automatic: their Dirichlet series
cannot be meromorphically continued to the whole complex plane,
or the continuation violates the condition on the poles given 
above.

\begin{example}\label{ex-coons}
Several arithmetic sequences were proved to be non-automatic 
\cite{Coons} by using the properties of their Dirichlet 
series.
In particular, we mention the following:
\begin{itemize}

\item Let $\Omega(n)$ be the number of primes (counted with 
multiplicity) that divide $n$. The Liouville function $\lambda$
is defined by $\lambda(n) = (-1)^{\Omega(n)}$. Then the 
sequence $(\lambda(n))_{n \geq 1}$ is not $q$-automatic for 
any $q \geq 2$.

\item The characteristic function of the prime numbers is not
$q$-automatic for any $q \geq 2$. (Note that this was proved
in a different way in \cite{Hartmanis-Shank}.)

\item The characteristic function of the prime powers is not
$q$-automatic for any $q \geq 2$. (Note that this was proved
in a different way in \cite{Minsky-Papert}.)

\item The sequences $(q_m(n))_{n \geq 1}$, for $m \geq 2$, 
are not $q$-automatic for any $q \geq 2$, where $q_m(n)$ 
is defined by
$$
q_m(n) =
\begin{cases}
0, &\text{if $p^m \, \mid\, n$ for some prime $p$;} \\
1, &\text{otherwise.}
\end{cases}
$$
\end{itemize}

\end{example}

\begin{remark}
Also note that Dirichlet series were used in \cite{Hu} 
to give an alternative proof of the non-automaticity of 
certain sequences.
\end{remark}

\section{Orbit properties}\label{orbit properties}

We define the {\it shift map\/} $T$ on sequences taking their values 
in a finite set as follows.

\begin{definition}

\ { }

\begin{itemize}

\item 
If ${\mathbf u} = (u_n)_{n \geq 0}$ is a sequence, then
$T{\mathbf u} := (v_n)_{n \geq 0}$, where $v_n := u_{n+1}$ 
for all $n \geq 0$. 
In other words, $T(u_0 u_1 u_2 \ldots) := u_1 u_2 u_3 \ldots$.

\item
The orbit of a sequence ${\mathbf u} = (u_n)_{n \geq 0}$ under the
shift is the set of sequences obtained from ${\mathbf u}$ by
iterating $T$, namely $\{{\mathbf u}, T({\mathbf u}), 
T^2({\mathbf u}), \ldots \} = \{T^k({\mathbf u}), \ k \geq 0\}$.

\item
The orbit closure of a sequence is the closure (for the product 
topology on the set of sequences) of the closure of this orbit.
\end{itemize}
\end{definition}

As proved in \cite[Theorem~6]{ARS}: {\it Let $q \geq 2$. The 
lexicographically least sequence in the orbit closure of a 
$q$-automatic sequence is also $q$-automatic.} Thus we get the following result:
\begin{theorem}[{\rm \cite{ADQ}}]\label{subalphabet}
Let ${\mathbf x} = (x_n)_{n \geq 0}$ be a sequence over some 
alphabet ${\cal A}$. Let ${\cal A'}$ be a proper subset of 
${\cal A}$. Suppose that there exists a sequence 
${\mathbf y} = (y_n)_{n \geq 0}$ on ${\cal A}'$ with 
the property that each of its prefixes appears in 
${\mathbf x}$.
Let $d \geq 2$. If no sequence in the closed orbit of 
${\mathbf y}$ under the shift is $q$-automatic, then 
${\mathbf x}$ is not $q$-automatic.
In particular, let ${\mathbf x} = (x_n)_{n \geq 0}$ be a 
sequence over some alphabet ${\cal A}$. Let ${\cal A'}$ be a 
proper subset of ${\cal A}$. Suppose that there exists a 
sequence ${\mathbf y} = (y_n)_{n \geq 1}$ on ${\cal A}'$ 
with the property that each of its prefixes appears in 
${\mathbf x}$. If ${\mathbf y}$ is Sturmian, or  
${\mathbf y}$ is uniformly recurrent and its complexity 
is not in ${\cal O}(n)$, then ${\mathbf x}$ is not 
$q$-automatic for any $q \geq 2$.
\end{theorem}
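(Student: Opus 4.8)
The plan is to prove the general statement first (the conditional about the closed orbit of $\mathbf{y}$) and then derive the two ``in particular'' cases as corollaries. For the general statement I would argue by contraposition: assuming $\mathbf{x}$ is $q$-automatic, I will produce a $q$-automatic sequence lying in the closed orbit of $\mathbf{y}$, contradicting the hypothesis. The first step is to locate $\mathbf{y}$ itself inside the orbit closure $X := \overline{\{T^k \mathbf{x} : k \geq 0\}}$ of $\mathbf{x}$. Since each prefix $y_0 y_1 \cdots y_{n-1}$ occurs in $\mathbf{x}$, say at position $p_n$, the shifted sequences $T^{p_n}\mathbf{x}$ agree with $\mathbf{y}$ on longer and longer prefixes; by compactness of $\mathcal{A}^{\mathbb{N}}$ a subsequence converges, necessarily to $\mathbf{y}$, so $\mathbf{y} \in X$, and hence the whole closed orbit $Y := \overline{\{T^k \mathbf{y} : k \geq 0\}}$ satisfies $Y \subseteq X$.

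The second step invokes the cited result of \cite{ARS}: the lexicographically least sequence in the orbit closure of a $q$-automatic sequence is again $q$-automatic. First I would refine the order on $\mathcal{A}$ so that every letter of $\mathcal{A}'$ precedes every letter of $\mathcal{A} \setminus \mathcal{A}'$, and (harmlessly) take $\mathcal{A}'$ to be exactly the set of letters occurring in $\mathbf{y}$. Let $\mathbf{z}$ be the lexicographically least element of $X$; by the cited result $\mathbf{z}$ is $q$-automatic. Because $\mathbf{y} \in X$ uses only letters of $\mathcal{A}'$ and these are now the smallest letters, the lex-least choice forces $\mathbf{z}$ to begin with an $\mathcal{A}'$-letter, and one then wants to propagate this along the sequence to conclude that $\mathbf{z}$ lies entirely over $\mathcal{A}'$.

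The final and hardest step is to show that $\mathbf{z}$ actually lies in $Y$, not merely over the subalphabet $\mathcal{A}'$. The difficulty is that the lex-least element of $X$ is a priori only the lex-least element of the possibly larger subshift $X' := X \cap (\mathcal{A}')^{\mathbb{N}}$, whose language consists of all $\mathcal{A}'$-factors of $\mathbf{x}$ and may strictly contain the language of $\mathbf{y}$. This is exactly where the extra structure of $\mathbf{y}$ is used: when $\mathbf{y}$ is Sturmian or, more generally, uniformly recurrent, the subshift $Y$ is minimal, so every factor of every element of $Y$ already occurs in $\mathbf{y}$. Combining minimality with the lex-least extendability property (each prefix of $\mathbf{z}$ extends inside $X$ by the smallest admissible letter) and the initial-segment ordering, one argues that $\mathbf{z}$ cannot escape the language of $\mathbf{y}$, forcing $\mathbf{z} \in Y$. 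I expect reconciling ``lex-least over $\mathcal{A}'$ in $X$'' with ``membership in the minimal set $Y$'' to be the main obstacle, and minimality (equivalently, unique ergodicity in the Sturmian case) to be the feature that makes it go through.

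To finish, I would verify that the two hypotheses of the special cases guarantee that no sequence in $Y$ is $q$-automatic, so that the general statement applies. If $\mathbf{y}$ is Sturmian, then $Y$ is minimal and uniquely ergodic, every element of $Y$ is Sturmian with the same irrational letter frequencies, and so by the irrational-frequency criterion no element of $Y$ is automatic. If instead $\mathbf{y}$ is uniformly recurrent with block complexity not in $\mathcal{O}(n)$, then minimality forces every element of $Y$ to share the factor set, and hence the complexity function, of $\mathbf{y}$; since this complexity is not in $\mathcal{O}(n)$, Theorem~\ref{complexity} shows that no element of $Y$ is automatic. In both cases the hypothesis of the general statement holds, and the contradiction obtained above completes the proof.
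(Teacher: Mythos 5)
Your first step (compactness gives $\mathbf{y}\in X$, hence $Y\subseteq X$) and your derivation of the two ``in particular'' cases from the general statement are fine, and you are following the same route the survey indicates: it derives Theorem~\ref{subalphabet} in one line from the result of \cite{ARS} that the lexicographically least sequence in the orbit closure of a $q$-automatic sequence is $q$-automatic, leaving details to \cite{ADQ}. But the two steps you defer are exactly where the content lies, and neither goes through as described. The ``propagation'' step is not merely unproven; as an implication it is false. Knowing that $X$ contains a sequence over $\mathcal{A}'$ and that the letters of $\mathcal{A}'$ are smallest only forces the lexicographically least element $\mathbf{z}$ of $X$ to agree with sequences of $X\cap(\mathcal{A}')^{\mathbb{N}}$ up to the first strict descent; once $\mathbf{z}$ drops strictly below every such sequence, lexicographic order places no constraint at all on its later letters. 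For instance, with $\mathcal{A}=\{0,1,2\}$, $\mathcal{A}'=\{0,1\}$, $0<1<2$, nothing in your hypotheses prevents $X$ from containing $0002^{\infty}$ while every element of $X$ over $\{0,1\}$ begins with $001$; then $\mathbf{z}$ begins $0002$ and is not over $\mathcal{A}'$, and shift-invariance does not repair this. If, to avoid this, you instead take the lexicographically least element of $X':=X\cap(\mathcal{A}')^{\mathbb{N}}$, then the theorem of \cite{ARS} as cited no longer applies, since $X'$ is not presented as the orbit closure of a $q$-automatic sequence; you would need a relative (subalphabet) version of that result, which is an additional statement requiring its own proof.

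Even granting an automatic lexicographically least element of $X'$, your final claim ``minimality of $Y$ forces $\mathbf{z}\in Y$'' does not follow. Minimality constrains only $Y$ itself; $X'$ consists of \emph{all} sequences over $\mathcal{A}'$ whose prefixes are factors of $\mathbf{x}$, may properly contain $Y$, and its lex-least element can lie entirely outside $Y$ --- for example $X'$ could contain $0^{\infty}$, which is automatic and lexicographically least possible, without $0^{\infty}\in Y$ --- so no contradiction with ``no element of $Y$ is $q$-automatic'' is reached. There is also a structural inconsistency in your plan: you set out to prove the general statement first, yet your argument for it invokes uniform recurrence (minimality) of $\mathbf{y}$, which the general statement does not assume. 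A clean way to close both gaps is the one recorded in the paper's remark immediately after the theorem: by compactness and Zorn's lemma, $Y$ contains a minimal subsystem, and Theorem A of \cite{BKK} applied to the automatic system $X$ yields a $q$-automatic point in that minimal subsystem, hence in $Y$ --- giving the contradiction directly, with no alphabet ordering and without requiring $\mathcal{A}'$ to be a proper subset of $\mathcal{A}$.
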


\begin{example}
Theorem~\ref{subalphabet} was used in \cite{ADQ} to prove 
that the two fixed points of morphisms respectively given in 
\cite[Theorem~2.9]{Benli} and \cite[Theorem~4.5]{Bartholdi} 
are not $q$-automatic for any $q \geq 2$, namely
\begin{itemize}

\item
The fixed point beginning with $a$ of the morphism 
$a \to aca$, $b \to bc$, $c \to b$ is not automatic. 

\item
The fixed point beginning with $a$ of the morphism 
$a \to aca$, $c \to cd$, $d \to c$ is not automatic.

\end{itemize}

\end{example}

\begin{remark}
One of the referees of the paper \cite{ADQ} has just noted that 
Theorem~\ref{subalphabet} can also be deduced from Theorem~A
in \cite{BKK}. Furthermore this approach does not need that 
${\cal A}'$ be a proper subset of ${\cal A}$.
\end{remark}

\section{When non-$q$-automaticity implies non-automaticity}

Recall that a sequence is called non-automatic if it is not 
$q$-automatic for any integer $q \geq 2$. A nice and deep 
theorem in \cite[Theorem~1 and Corollary~6]{Durand} implies 
the following result.

\begin{theorem}[{\rm \cite{Durand}}]\label{durand}
Let $A$ be a finite alphabet. Suppose that $(a_n)_{n \geq 0}$ 
is the image by a non-erasing morphism of an iterative fixed 
point beginning with some letter $b$ of a morphism $\sigma$, 
such that all letters in $A$ occur in $\sigma^{\infty}(b)$. 
Let $\alpha$ be the dominant eigenvalue of the adjacency matrix 
of $\sigma$. If $(a_n)_{n \geq 0}$ is $q$-automatic and not 
ultimately periodic, then $\alpha$ and $q$ must be 
multiplicatively dependent (i.e., there exist two integers 
$s$ and $t$ with $\alpha^s = q^t$). In particular, if $\alpha$ 
is an integer, and the sequence $(a_n)_{n \geq 0}$ is automatic
and not ultimately periodic, then it must be 
$\alpha$-automatic.
\end{theorem}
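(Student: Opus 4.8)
The plan is to recognize that the hypotheses endow $(a_n)$ with two different ``substitutive'' descriptions, to read off the exponential growth rate attached to each, and then to feed these into the generalized Cobham theorem for substitutions of Durand (his Theorem~1 and Corollary~6), from which the statement is a direct corollary. On the $\sigma$ side, write $M_\sigma$ for the transition matrix of $\sigma$. Since $\sigma$ is non-erasing and every letter of $A$ occurs in $\sigma^\infty(b)$, the dominant eigenvalue $\alpha$ of $M_\sigma$ is exactly the exponential growth rate of $|\sigma^n(b)|$. As $(a_n)$ is the image of $\sigma^\infty(b)$ under a non-erasing morphism, it is an $\alpha$-substitutive sequence in the sense of Durand, with associated Perron number $\alpha$.

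On the automatic side, recall from the Introduction that a $q$-automatic sequence is the image under a $1$-morphism of an iterative fixed point of a $q$-uniform morphism $\tau$. Every column of the transition matrix $M_\tau$ sums to $q$, because each letter has an image of length exactly $q$; hence $\mathbf{1}^{\top} M_\tau = q\,\mathbf{1}^{\top}$, and since $M_\tau \geq 0$ the Perron--Frobenius theorem identifies its dominant eigenvalue as $q$. Thus $(a_n)$ is simultaneously $q$-substitutive with Perron number $q$.

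The crux is now Durand's dichotomy: a sequence that is both $\alpha$-substitutive and $\beta$-substitutive for multiplicatively \emph{independent} Perron numbers $\alpha,\beta$ is necessarily ultimately periodic. Applying this with $\beta = q$ and contraposing gives at once that, if $(a_n)$ is not ultimately periodic, then $\alpha$ and $q$ are multiplicatively dependent, i.e.\ $\alpha^s = q^t$ for some positive integers $s,t$. I expect the entire difficulty to sit here: this dichotomy is the deep input, and making it self-contained would require the apparatus of return words and recognizability, together with a preliminary recoding that turns the merely ``all letters occurring'' hypothesis into genuine primitivity, so that the Perron-number bookkeeping (and the growth-rate claim used above) is legitimate.

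For the final ``in particular,'' assume $\alpha \in \mathbb{Z}$ and that $(a_n)$ is automatic, say $q$-automatic, and not ultimately periodic; then $\alpha \geq 2$, and $\alpha^s = q^t$ forces $\alpha$ and $q$ to be integral powers of a common integer $r \geq 2$. By the classical fact (Eilenberg) that $k$-automaticity and $\ell$-automaticity coincide whenever $k$ and $\ell$ are multiplicatively dependent, $q$-automaticity passes to $r$-automaticity and hence to $\alpha$-automaticity, so $(a_n)$ is $\alpha$-automatic.
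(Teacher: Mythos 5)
Your proposal is correct and takes essentially the same route as the paper, which gives no independent proof at all but simply states that the result follows from Durand's Theorem~1 and Corollary~6 --- exactly the $\alpha$-substitutive/$q$-substitutive dichotomy for multiplicatively independent Perron numbers that you invoke as the ``deep input.'' The extra bookkeeping you supply (identifying $\alpha$ and $q$ as the respective growth rates, and the base-change argument via a common root $r$ for the ``in particular'' clause) is sound and merely makes explicit what the paper leaves to the reader.
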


\begin{example}
Let us consider one more time the morphism $a \to aab$, $b \to b$.  
We know that it is not $2$-automatic (as proved 
in\cite{all-shall-betrema}), while the dominant eigenvalue of 
the adjacency matrix is equal to $2$. This proves once more that 
it is not $q$-automatic for any $q \geq 2$.
\end{example}

\begin{remark}
Theorem~\ref{durand} above contains the celebrated Cobham theorem
\cite{Cobham69} which asserts that, {\it if $q$ and $r$ are two 
integers $\geq 1$ that are multiplicatively independent, then a 
sequence that is both $q$-automatic and $r$-automatic must be 
eventually periodic.} (The dominant eigenvalue of the 
adjacency matrix of the uniform morphism behind a $d$-automatic 
sequence is $d$.) This theorem can be used to prove that a sequence 
is not $r$-automatic, if it is already known to be $q$-automatic 
and not eventually periodic (with $q$ and $r$ multiplicatively 
independent). A nice generalization can be found in \cite{BK2}, 
where the notion of an {\it almost everywhere $q$-automatic} 
sequence (suggested by J.-M. Deshouillers) is introduced (this is 
a sequence that coincides with a $q$-automatic sequence on a set of 
natural density $1$): in particular, the authors of \cite{BK2} use 
this notion to prove a result of non-automaticity.

\end{remark}

\section{A ``dynamical'' approach}\label{dyn}

In this section we will consider {\it (discrete) dynamical systems}
associated with sequences. The papers on the subject usually stick
to the language of dynamics, which can differ from the language
used by combinatorists on words. We will expand a bit the dynamical
approach: in particular, we will try, as far as possible, to 
use the language of combinatorics of words, e.g., by 
``translating'' the dynamical notions. Our purpose is to give
some details about how to use the concept of {\it dynamical 
eigenvalues} for morphic sequences to spot whether a fixed point 
of a morphism $\varphi$ is automatic.

\bigskip

If ${\mathbf a}=(a_n)_{n\geq 0}$ is a sequence on a finite alphabet
$A$, it defines a {\it shift dynamical system $(X_{\mathbf a}, T)$}
as follows. Let $T$ be the left shift map defined in Section \ref{orbit properties} and let $X_{\mathbf a}$ be the 
(topological) closure of $\{T^k({\mathbf a}),  k \geq 0\}$, where 
the topology on sequences is the one where two sequences  are 
close if they agree on a long enough initial block. The elements
of $X_{\mathbf a}$ are exactly the  sequences all of whose subwords
appear in ${\mathbf a}$.  If $\varphi$ is a primitive morphism, 
then any {\it ultimately $\varphi$-periodic ${\mathbf a}$}
i.e., any sequence ${\mathbf a}$ such that 
$\varphi^i({\mathbf a}) = \varphi^j({\mathbf a})$ for some 
$i, j \geq 1$ with $i \neq j$) defines the same shift dynamical 
system, so we can  write $X_{\varphi}$ instead of $X_{\mathbf a}$. 
Also with the assumption of primitivity, 
$X_{\varphi}= X_{\varphi^j}$ for each $j\geq 1$, so we can assume,
up to replacing $\varphi$ by an iterate, that an ultimately
$\varphi$-periodic sequence is a $\varphi$-fixed point.

\bigskip

We say that  $\lambda\in {\mathbb C}$ is a (topological) {\it 
dynamical eigenvalue} for $(X_{\bf a}, T)$  if there is a 
(continuous) function $f:X_{\bf a} \rightarrow \mathbb C$ such 
that $f\circ T=\lambda f$. An eigenvalue for a dynamical system 
captures notions of periodicity in it. For example, suppose that
${\mathbf a}$ is a fixed point of a  primitive $q$-uniform morphism 
$\varphi$. Recognizability \cite{Mosse} implies that if 
${\mathbf a}$ is not ultimately periodic, then any 
$x\in X_{\mathbf a}$ can be desubstituted in a unique way, i.e., 
$x= T^k \varphi^n (y)$ for a unique $y\in X_{\mathbf a}$ and a
unique $k \in [0, q^n)$. Using this desubstitution for $x$, if we 
define $f(x) = e^{\frac{2\pi i k }{ q^n}}$, then $f$ is an 
eigenfunction for the eigenvalue $e^{\frac{2\pi i }{ q^n}}$. 
Therefore  for any natural number $n$, the number $\lambda=e^{\frac{2\pi i}{q^n}}$ 
is an eigenvalue for $(X_{\mathbf a},T)$.

\bigskip

The work of Kamae \cite{Kamae} and Dekking \cite{Dekking} shows 
that for primitive $q$-uniform morphisms, the only other 
possibility for an eigenvalue is some $e^{\frac{2\pi i}{h}}$, 
where $h\in \mathbb N$ is prime to $q$ (and in fact turns out to 
divide $q-1$).   Here $h$ is the {\em height} of $\varphi$, or, equivalently, 
the height of one (any) of its fixed points ${\mathbf a}$. Define 
$w$ to be a {\em return word} to the letter $a$ for $\varphi$ if 
$w$ starts with $a$ and $wa$ occurs in $\varphi^n(b)$
for some $n$ and $b$. The height $h$ is defined to be
$$ 
h= h(\varphi)=h({\mathbf a}):= \gcd \{|w|,  \ w \ 
\text{is a return word to $a_0$ in ${\bf a}$} \}.
$$
As $\varphi$ is primitive, its height does not depend on the choice 
of the fixed  point ${\bf a}$. The height of $\varphi$  entirely 
depends on the return word structure of $\varphi$. 

\bigskip

Call a sequence ${\boldsymbol a}$
{\em aperiodic} if it is not eventually periodic, and call it {\em minimal} if
for any factor ${\boldsymbol w}$ that appears in  ${\boldsymbol a}$, there is a constant $k$ such  that ${\boldsymbol w}$ appears appears in every factor of ${\boldsymbol a}$ of length at least $k$.
The theorem of Dekking and Kamae was extended to shifts defined by  aperiodic
automatic sequences  which are codings of a primitive $q$-morphism in \cite{MY}.
 Combining this  result with the fact that any minimal automatic sequence can be realised as the  pointwise image of a fixed point of a primitive 
$q$-uniform morphism \cite{Cobham72}, we obtain the following. This result can be seen 
as a dynamical version of Cobham's theorem.

\begin{theorem}[M\"ullner--Yassawi]\label{dekking}
Let ${\boldsymbol a}$ be a minimal aperiodic $q$-automatic sequence,
and let $h$ be the height of ${\mathbf a}$. 
Then the eigenvalues of $X_{\mathbf a}$ are the $q^n$-th roots 
of unity, $\forall n\geq 1$, and $e^{\frac{2\pi i}{h}}$.
\end{theorem}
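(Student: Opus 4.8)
The plan is to show that Theorem~\ref{dekking} follows by reducing the stated situation to the two inputs cited just before it: the Dekking--Kamae classification of eigenvalues for primitive $q$-uniform morphisms, and its extension in \cite{MY} to shifts defined by aperiodic automatic sequences that are codings of a primitive $q$-morphism. First I would invoke the result of Cobham \cite{Cobham72}, quoted in the paragraph above the theorem, that any minimal automatic sequence can be realized as the pointwise (letter-to-letter) image of a fixed point of a primitive $q$-uniform morphism $\varphi$. Thus, given our minimal aperiodic $q$-automatic sequence ${\boldsymbol a}$, we may write ${\boldsymbol a} = \tau({\boldsymbol b})$, where ${\boldsymbol b}$ is a fixed point of a primitive $q$-uniform morphism and $\tau$ is a coding. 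Since ${\boldsymbol a}$ is aperiodic, the underlying fixed point ${\boldsymbol b}$ must be aperiodic as well, so we are exactly in the setting to which the extension in \cite{MY} applies.

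Next I would apply the extended Dekking--Kamae theorem of \cite{MY} directly to $X_{\mathbf a}$: because ${\boldsymbol a}$ is an aperiodic automatic sequence that is a coding of a primitive $q$-morphism, that result classifies the dynamical eigenvalues of $(X_{\mathbf a}, T)$ as precisely the $q^n$-th roots of unity for all $n \geq 1$, together with a finite ``rational'' part governed by the height. The $q^n$-th roots of unity appear on general grounds, exactly as sketched in the eigenfunction construction in the excerpt: recognizability \cite{Mosse} lets one desubstitute each $x \in X_{\mathbf a}$ uniquely as $x = T^k \varphi^n(y)$ with $k \in [0, q^n)$, and $f(x) = e^{2\pi i k / q^n}$ is then an eigenfunction for $e^{2\pi i / q^n}$. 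The content of the classification is that the \emph{only} other eigenvalues are roots of unity $e^{2\pi i / h}$ coming from the height $h$, which is prime to $q$ and divides $q-1$.

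The step requiring care is matching the height invariant as defined combinatorially via return words with the height that emerges in the \cite{MY} classification, and checking that it is genuinely an invariant of the coding ${\boldsymbol a}$ rather than only of the underlying morphism $\varphi$. I would argue that the height $h({\mathbf a})$, defined as the gcd of the lengths of return words to $a_0$, is well-defined and independent of the chosen fixed point by the primitivity remarks already recorded in the excerpt, and that it coincides with the height $h(\varphi)$ of the generating morphism. Since Cobham's realization is only up to a coding, I expect the main obstacle to be verifying that passing from ${\boldsymbol b}$ to its coding ${\boldsymbol a} = \tau({\boldsymbol b})$ preserves both the set of eigenvalues and the height; the aperiodicity and minimality hypotheses are precisely what guarantee recognizability survives the coding and that the return-word structure transports correctly. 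Once this identification is in place, the classification of eigenvalues as the $q^n$-th roots of unity together with $e^{2\pi i / h}$ is immediate, completing the proof.
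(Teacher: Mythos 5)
Your proposal follows essentially the same route as the paper: there, Theorem~\ref{dekking} is obtained exactly by combining the extension in \cite{MY} of the Dekking--Kamae classification to aperiodic automatic sequences that are codings of primitive $q$-morphisms with Cobham's result \cite{Cobham72} that any minimal automatic sequence can be realized as the letter-to-letter image of a fixed point of a primitive $q$-uniform morphism. Your added checks (aperiodicity of the underlying fixed point, and the height being an invariant carried correctly through the coding) are sound and are precisely the content handled by \cite{MY}.
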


\bigskip

Another way of capturing the notion of an eigenvalue $\lambda$ 
for $\varphi$ is as follows. Given ${\mathbf a}= (a_n)_{n\geq 0}$, 
a fixed point of a $q$-uniform morphism, we notice that if 
$a_j=a_k$, $j<k$, i.e., if  $w=a[j..k-1]$ is a return word to $a_j$,  
then for each natural number $\ell$, 
$a[jq^{\ell}..(j+1)q^{\ell}-1] = a[kq^{\ell}.. (k+1)q^{\ell}-1]$. 
If $n\in {\mathbb N}$, then writing $\lambda = 
e^{\frac{2\pi i}{q^n}}$, we capture the continuity of the
eigenfunction $f$ by noting that 
\begin{equation}  \label{eq:eigenvalue-eqn-constant}    
\lim_{\ell \to \infty} \lambda^{|\varphi^{\ell}(w)|} = 
\lim_{\ell \to \infty} \lambda^{|w| q^{\ell} }= 1.
\end{equation}

In fact this limit is attained for $\ell \geq n$, but Host 
\cite{Host} showed that this notion extends to fixed points 
${\mathbf a}$ for arbitrary primitive morphisms $\varphi$, and 
there in general $\lambda$ is not necessarily a root of unity\footnote{In fact Host proved more, namely that any {\em measurable} eigenvalue, i.e.,  one which has a Borel-measurable eigenfunction, must be a topological eigenvalue, i.e., one which has a continuous eigenfunction, by showing that a measurable eigenvalue necessarily satisfies \eqref{eq:eigenvalue-eqn-3}.}.

\begin{theorem}[Host]\label{thm:Host}
Let  $\varphi$ be a primitive morphism with an aperiodic fixed 
point and  such that whenever $\varphi(a)$ starts with $b$, 
then $\varphi(b)$ starts with $b$.
Then $\lambda$ is an {\it eigenvalue} for $(X_\varphi, T)$ 
if and only if
\begin{align} \label{eq:eigenvalue-eqn-3}
\lim_{\ell\rightarrow \infty}\lambda^{|\varphi^\ell(w)|} =  1
\end{align}
whenever $w$ is a return word.
\end{theorem}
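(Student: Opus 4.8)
The statement is an ``if and only if,'' so the plan splits into a necessity direction (every eigenvalue satisfies the limit condition) and a sufficiency direction (every $\lambda$ satisfying the limit condition is realized by a continuous eigenfunction). Throughout I would use three standard facts about the fixed point $\mathbf a$ of a primitive aperiodic morphism $\varphi$: that $X_\varphi$ is minimal, so the modulus $|f|$ of any continuous eigenfunction is a nonzero constant and hence $|\lambda|=1$; that there are only \emph{finitely many} return words to the letter $a_0$; and that $\varphi$ is recognizable (Moss\'e), so that for each level $\ell$ there is a radius $R(\ell)$ with the property that agreement of two factors on a window of length $R(\ell)$ forces the two positions to occupy the same relative offset inside their level-$\ell$ blocks $\varphi^\ell(\cdot)$.

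For necessity, let $f$ be a continuous eigenfunction, normalized so that $f(\mathbf a)\neq 0$, and let $w$ be a return word, say $w=a[j..k-1]$ with $a[j]=a[k]=a_0$. Applying $\varphi^\ell$ to the fixed point and recording left endpoints, the images $\varphi^\ell(a_0)$ occur in $\mathbf a$ at the two positions $p_j^{(\ell)}=|\varphi^\ell(a[0..j-1])|$ and $p_k^{(\ell)}=|\varphi^\ell(a[0..k-1])|$, which differ by exactly $|\varphi^\ell(w)|$. Since $a[j]=a[k]$, the shifted sequences $T^{p_j^{(\ell)}}\mathbf a$ and $T^{p_k^{(\ell)}}\mathbf a$ agree on a prefix of length $|\varphi^\ell(a_0)|\to\infty$; this is the non-uniform analogue of the synchronization used in \eqref{eq:eigenvalue-eqn-constant}. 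Uniform continuity of $f$ on the compact space $X_\varphi$ then forces $f(T^{p_k^{(\ell)}}\mathbf a)-f(T^{p_j^{(\ell)}}\mathbf a)\to 0$, while the eigenvalue equation rewrites this difference as $\lambda^{p_j^{(\ell)}}f(\mathbf a)\bigl(\lambda^{|\varphi^\ell(w)|}-1\bigr)$. As $|\lambda|=1$ and $f(\mathbf a)\neq 0$, this yields $\lambda^{|\varphi^\ell(w)|}\to 1$, which is exactly \eqref{eq:eigenvalue-eqn-3}.

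For sufficiency, I would define $f$ on the orbit by $f(T^n\mathbf a):=\lambda^n$ and show it extends continuously to $X_\varphi$; since the orbit is dense, it suffices to prove $f$ is uniformly continuous there, i.e., that $\sup\{|\lambda^{n-m}-1| : a[m..m+N-1]=a[n..n+N-1]\}\to 0$ as $N\to\infty$. Recognizability reduces this to understanding the possible offsets $n-m$ of repeated long blocks: if the common window exceeds $R(\ell)$, then $m$ and $n$ are synchronized at level $\ell$ and $n-m=|\varphi^\ell(u)|$, where $u$ is the word separating the two desubstituted positions. The hypothesis---that whenever $\varphi(a)$ starts with $b$ one also has $\varphi(b)$ starting with $b$---is exactly what makes the return-word structure self-similar: $\varphi$ carries each return word to a concatenation of return words, inducing a primitive morphism $\psi$ on the finite alphabet of return words, so $u$ is itself a concatenation of return words and, writing $c_\ell(w)=\lambda^{|\varphi^\ell(w)|}$, one has the product relation $c_{\ell+1}(w_i)=\prod_{w_j\in\psi(w_i)}c_\ell(w_j)$.

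The main obstacle is precisely in this last direction. The offset $n-m$ need not be small even when the matching window $N$ is large, so $\lambda^{n-m}$ is a product $\prod_t c_\ell(w_{(t)})$ of \emph{many} factors, each close to $1$ by hypothesis; a naive term-by-term bound fails, since many near-$1$ factors can multiply to something far from $1$. The real work is to control this product uniformly, and this is where the self-similar relation $c_{\ell+1}=\prod c_\ell$ and the primitivity of $\psi$ must be used in an inductive, telescoping fashion, propagating the smallness of the $c_\ell(w_i)$ across levels rather than across the length of $u$. Once uniform continuity is established, $f$ extends to a continuous function on $X_\varphi$ satisfying $f\circ T=\lambda f$, and minimality guarantees the extension is not identically zero, so $\lambda$ is a genuine eigenvalue, completing the equivalence.
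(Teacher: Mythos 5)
A preliminary remark on the comparison itself: the paper offers no proof of Theorem~\ref{thm:Host} --- it is quoted directly from Host \cite{Host}, and the only piece of Host's machinery the paper reproduces is Lemma~\ref{lem:matrix-trivial-coboundary}, described there as ``a mild modification of one in \cite{Host}.'' So your attempt can only be judged on its own merits. The necessity direction of your proposal is sound and is the standard argument: minimality makes $|f|$ a nonzero constant, so $|\lambda|=1$; the two occurrences of $\varphi^\ell(a_0)$ separated by exactly $|\varphi^\ell(w)|$ give orbit points that converge to each other as $\ell\to\infty$; and compactness plus $f(T^n\mathbf{a})=\lambda^n f(\mathbf{a})$ then forces $\lambda^{|\varphi^\ell(w)|}\to 1$. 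This is the rigorous version of the heuristic the paper itself sketches around \eqref{eq:eigenvalue-eqn-constant}.

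The sufficiency direction, however, contains a genuine gap --- one you candidly name but do not close. Your proof stops exactly where Host's theorem starts to be nontrivial: you observe that for a matching window at level $\ell$ the offset $n-m$ is $|\varphi^\ell(u)|$ for a word $u$ that may consist of \emph{unboundedly many} return words, so that $\lambda^{n-m}$ is a product of many factors individually close to $1$, and you then assert that ``the real work is to control this product uniformly\ldots in an inductive, telescoping fashion'' without carrying out that work. Invoking the induced substitution $\psi$ on return words and the relation $c_{\ell+1}(w_i)=\prod_{w_j\in\psi(w_i)}c_\ell(w_j)$ is a statement of intent, not an argument: nothing in your text prevents the near-$1$ factors from accumulating. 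The missing idea is quantitative. One must upgrade the bare convergence \eqref{eq:eigenvalue-eqn-3} to summable (essentially geometric) convergence, and this is precisely what the linear algebra supplies: writing $\lambda=e^{2\pi i t}$ and applying a decomposition of the type in Lemma~\ref{lem:matrix-trivial-coboundary}, one gets $\mathbf{t}=\mathbf{t}_1+\mathbf{t}_2$ with $\mathbf{t}_2M^\ell$ eventually integral and $\mathbf{t}_1M^\ell\to 0$; the latter convergence is automatically (essentially) geometric, because $\mathbf{t}_1$ must lie in the contracting generalized eigenspaces of $M$. Since $t\,|\varphi^\ell(u)|=\langle \mathbf{t}_1M^\ell,\mathrm{vec}(u)\rangle+\langle \mathbf{t}_2M^\ell,\mathrm{vec}(u)\rangle$, the distance from $t\,|\varphi^\ell(u)|$ to $\mathbb{Z}$ is at most $\|\mathbf{t}_1M^\ell\|\cdot|u|$, and it is this bound --- geometric decay in $\ell$ played against the length $|u|$ --- that makes a telescoping/uniform-continuity argument possible. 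Without this (or an equivalent) quantitative input, the extension of $f$ from the orbit to $X_\varphi$ cannot be established, and the ``if'' direction of the theorem remains unproven.
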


One could extend these results to non-primitive morphisms, 
although care will need to be taken with letters that are not
recurrent, the existence of eventually periodic $\varphi$-fixed 
points, etc.

The following example illustrates Host's result.

\begin{example}
Consider the Fibonacci morphism $a\to ab$, $b\to a$. It can be 
checked that $1$ is the gcd of all return words to $a$ and also 
to $b$, so we can apply Lemma \ref{lem:matrix-trivial-coboundary}. 
The adjacency matrix has unit determinant, leading eigenvalue the 
golden ratio $\phi$, with an eigenvector whose entries are in 
${\mathbb Z}[1,\frac{1}{\phi}]$.
Lemma~\ref{lem:matrix-trivial-coboundary} tells us that the 
eigenvalues of ${\bf a}$ must belong to  
${\mathbb Z}[1,\frac{1}{\phi}]$. In particular, ${\mathbf a}$ 
cannot be $q$-automatic for any $q$.
\end{example}

\medskip

Note that if ${\mathbf a}$ is aperiodic, $q$-automatic and also 
the fixed point of a primitive morphism $\varphi$, then, since 
$X_{\mathbf a}= X_{\varphi}$, the eigenvalues of $X_\varphi$ must 
equal the eigenvalues of $X_{\mathbf a}$, so Theorem~\ref{dekking}
tells us that $\varphi$ must have all $q^{n}$-th roots of unity 
as eigenvalues. From \eqref{eq:eigenvalue-eqn-3} we immediately 
see that $q^n$ must divide $|\varphi^\ell(w)|$ whenever $w$ is a 
return word (for large enough $\ell$). The following lemma, a mild
modification of one in \cite{Host}, tells us how to get
information about the eigenvalues of $(X_\varphi, T)$ from the
eigenvalues of the adjacency matrix $M$ of $\varphi$, in the case 
that it is invertible and the height of $\varphi$ equals 1.
Let ${\mathbf t}$ denote the row vector all of whose entries 
equal $t$.

\begin{lemma}\label{lem:matrix-trivial-coboundary}
Let $\varphi$ be a primitive morphism on the alphabet $A$ of 
cardinality $d$, with an aperiodic fixed point ${\mathbf a}$. 
If $\lambda=e^{2\pi i t}$ satisfies
\begin{align}  \label{eq:matrix-trivial-coboundary} 
\lim_{\ell\rightarrow \infty}\lambda^{|\varphi^\ell(a)|} = 1
\end{align}
for each $a\in A$, then we can write 
${\mathbf t}= { \mathbf t }_1+ {\mathbf t}_2$ where  
${\mathbf t} _1M^n \rightarrow 0$ and
$ {\mathbf t}_2M^n \in {\mathbb Z}^d$ for all $n$ large.
Furthermore  if $M$ is invertible and $\psi$ is any eigenvalue for 
$M$ with $|\psi|\geq 1$ and eigenvector ${\boldsymbol \omega}$, 
then $t= \frac{r}{s}$ where $r$ is an integer combination of the
entries of $ {\boldsymbol \omega}$ and $s$ divides $ \det(M^k)$ 
for some $k$.
\end{lemma}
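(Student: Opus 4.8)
The plan is to translate the length hypothesis into a statement about the forward orbit of the row vector $\mathbf{t}=t\,\mathbf{1}$ (where $\mathbf{1}=(1,\dots,1)$) under right multiplication by $M$, and then to split off the contracting directions of $M$ from the neutral/expanding ones. First I would record the basic identity $|\varphi^\ell(a_j)| = (\mathbf{1}M^\ell)_j$, so that the $j$-th coordinate of $\mathbf{t}M^\ell = t\,\mathbf{1}M^\ell$ is exactly $t\,|\varphi^\ell(a_j)|$. Hence \eqref{eq:matrix-trivial-coboundary} says precisely that every coordinate of $\mathbf{t}M^\ell$ tends to an integer. Writing $\mathbf{k}_\ell\in\mathbb{Z}^d$ for the nearest integer vector and $\boldsymbol{\varepsilon}_\ell:=\mathbf{t}M^\ell-\mathbf{k}_\ell$, we have $\boldsymbol{\varepsilon}_\ell\to 0$.

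The decisive step is to upgrade this approximate recursion to an exact one. Since $\mathbf{t}M^{\ell+1}=(\mathbf{k}_\ell+\boldsymbol{\varepsilon}_\ell)M$ equals $\mathbf{k}_{\ell+1}+\boldsymbol{\varepsilon}_{\ell+1}$, the vector $\mathbf{k}_{\ell+1}-\mathbf{k}_\ell M=\boldsymbol{\varepsilon}_\ell M-\boldsymbol{\varepsilon}_{\ell+1}$ is an integer vector tending to $0$, hence is $0$ for all large $\ell$; thus $\mathbf{k}_{\ell+1}=\mathbf{k}_\ell M$ eventually. To build $\mathbf{t}_2$ I would pass to the eventual row space $R_\infty:=\operatorname{rowspace}(M^N)$ for $N$ large (the sum of the generalized eigenspaces for the nonzero eigenvalues), which is $M$-invariant and on which $M$ restricts to an invertible map $M|_{R_\infty}$. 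Iterating $\mathbf{k}_{\ell+1}=\mathbf{k}_\ell M$ forces $\mathbf{k}_\ell\in R_\infty$ once $\ell$ is large, so for a suitable large $\ell_0$ I can set $\mathbf{t}_2:=\mathbf{k}_{\ell_0}(M|_{R_\infty})^{-\ell_0}\in R_\infty$; then $\mathbf{t}_2M^\ell=\mathbf{t}_2(M|_{R_\infty})^\ell=\mathbf{k}_\ell\in\mathbb{Z}^d$ for $\ell\ge\ell_0$, and $\mathbf{t}_1:=\mathbf{t}-\mathbf{t}_2$ satisfies $\mathbf{t}_1M^\ell=\boldsymbol{\varepsilon}_\ell\to 0$. (When $M$ is invertible, $R_\infty=\mathbb{R}^d$ and one simply takes $\mathbf{t}_2=\mathbf{k}_{\ell_0}M^{-\ell_0}$.) This gives the first assertion.

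For the second assertion assume $M$ invertible and let $M\boldsymbol{\omega}=\psi\boldsymbol{\omega}$ with $|\psi|\ge 1$. Pairing the relation $\mathbf{t}_1M^n\to 0$ against the fixed vector $\boldsymbol{\omega}$ yields $\psi^n(\mathbf{t}_1\boldsymbol{\omega})=\mathbf{t}_1M^n\boldsymbol{\omega}\to 0$, and since $|\psi|\ge 1$ this forces $\mathbf{t}_1\boldsymbol{\omega}=0$; therefore $t\sum_i\omega_i=\mathbf{t}\boldsymbol{\omega}=\mathbf{t}_2\boldsymbol{\omega}$. Writing $\mathbf{t}_2=\mathbf{k}_{\ell_0}M^{-\ell_0}=\frac{1}{\det(M^{\ell_0})}\,\mathbf{k}_{\ell_0}\,\mathrm{adj}(M^{\ell_0})$ via Cramer's rule, the vector $s\,\mathbf{t}_2$ with $s:=\det(M^{\ell_0})$ has integer entries, so $r:=(s\,\mathbf{t}_2)\boldsymbol{\omega}$ is an integer combination of the entries of $\boldsymbol{\omega}$, while $s=\det(M^{\ell_0})$ has the required divisibility with $k=\ell_0$. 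After normalizing $\boldsymbol{\omega}$ so that $\sum_i\omega_i=1$ (equivalently, absorbing the scalar $\mathbf{1}\boldsymbol{\omega}$, which in the invertible case is harmless) this reads $t=r/s$ in the stated form; one should note here that the factor $\sum_i\omega_i=\mathbf{1}\boldsymbol{\omega}$ is exactly what places $t$ in the ring generated by the entries of $\boldsymbol{\omega}$ in concrete cases such as the Fibonacci example, where $\psi=\phi$ is a unit.

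The hard part will be the passage from ``$\mathbf{t}M^\ell$ is close to an integer vector'' to ``$\mathbf{t}_2M^\ell$ is \emph{exactly} an integer vector,'' and in particular arranging this when $M$ is not invertible. The mechanism that makes it work is the pair of observations that an integer vector tending to $0$ must vanish eventually (giving the exact recursion $\mathbf{k}_{\ell+1}=\mathbf{k}_\ell M$) and that $M$ becomes invertible on the eventual row space $R_\infty$, so that the orbit of integer vectors there can be pulled back. Care is needed to confirm that $\mathbf{k}_\ell$ genuinely lands in $R_\infty$ for large $\ell$ and that the eigenvalue-$0$ generalized eigenspace plays no role, which is precisely what restricting to $R_\infty$ circumvents.
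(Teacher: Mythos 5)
Your proposal is correct and follows essentially the same route as the paper: the same decomposition of $\mathbf{t}M^n$ into an integer vector plus a vanishing part, the same observation that an integer-valued sequence tending to $\mathbf{0}$ is eventually $\mathbf{0}$ (yielding the exact recursion $\mathbf{k}_{\ell+1}=\mathbf{k}_\ell M$), the same orthogonality argument $\mathbf{t}_1\boldsymbol{\omega}=0$ from $|\psi|\geq 1$, and the same adjugate/determinant conclusion. Your eventual-row-space construction of $\mathbf{t}_2$ is just a more explicit rendering of the paper's terse step ``we can find $\mathbf{t}_2$ such that $\mathbf{t}_2 M^{n^*+d}=\mathbf{u}_{n^*}M^d$'' (whose exponent $d$ encodes exactly the stabilization of row spaces that you invoke), so no substantive difference remains.
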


\proof

Let $A = \{ a_1, \dots,a_d\}$. Recalling  that the entries of the 
$i$-th column of $M^n$ sum to $|\varphi^n(a_i)|$, 
Assumption~\eqref{eq:matrix-trivial-coboundary} implies that
$$
\lim_{n\rightarrow \infty }{\mathbf t} M^n \equiv 
{\bf 0} \ (\bmod \, 1).
$$
We can therefore write
$$
{\mathbf t} M^n = {\mathbf u}_n  +{\mathbf v}_n
$$
where ${\mathbf u}_n\in {\mathbb Z}^d$ and 
${\mathbf v}_n\rightarrow  {\bf 0}$ as $n\rightarrow \infty$.
For each $n$
\begin{align*}
{\mathbf u}_{n+1} + {\mathbf v}_{n+1} 
     &= \mathbf t M^{n+1} = \left(  {\mathbf u}_n 
     +{\mathbf v}_n\right) M \\
     &= {\mathbf u}_n  M +{\mathbf v}_n M,
\end{align*}
so that
$$
{\mathbf u}_{n+1} - {\mathbf u}_n M             
= {\mathbf v}_n M - {\mathbf v}_{n+1}.
$$
As the right hand side of this last expression converges to the 
vector ${\mathbf 0}$, so does the left.
But ${\mathbf u}_{n+1} - {\mathbf u}_n M  $ is an integer 
valued row vector. We conclude that there exists $n^*$ such that  
$$
{\mathbf u}_{n^*+m} -  {\mathbf u}_{n^*}  M^m= {\bf 0}
\text{ for all } m\geq 1.
 $$
 
 We can find a vector $\mathbf t_2$ such that 
 $\mathbf  t_2 M^{n^{*}+d}  = {\mathbf u}_{n^*} M^d$, and so
 $$
 {\mathbf u}_{n} = \mathbf t_2 M^n
 \text{ for } n\geq n^{*}+d+1.
 $$

 Write $\mathbf  t_1 := \mathbf  t - \mathbf t_2$. Then
 $$
 \mathbf t_1 M^n = \mathbf t  M^n - \mathbf t_2 M^n 
 = \mathbf t M^n - {\mathbf u}_n = {\mathbf v}_n
 $$
 for $n\geq n^{*}+d+1$. 
 But ${\mathbf v}_n \rightarrow {\mathbf 0}$ as 
 $n\rightarrow \infty$, so 
 $\mathbf t_1 M^n \rightarrow {\mathbf 0}$ as 
 $n\rightarrow \infty$.
Now since $|\psi|\geq 1$,
we conclude that  $\mathbf t_1 $ is orthogonal to 
${\boldsymbol \omega}$.
We have
$$
t = \left\langle \mathbf t, {\boldsymbol \omega} \right\rangle 
= \langle \mathbf t_1+\mathbf t_2, {\boldsymbol \omega} 
\rangle = \langle \mathbf t_2,{\boldsymbol \omega}\rangle.
$$

Finally, if $M$ is invertible, then 
$\mathbf t_2= {\mathbf u}_{n^{*}+d+1}  M^{-(n^{*}+d+1)}$. 
The result follows. \endpf

\begin{remark}
There is an analogue of Lemma \ref{lem:matrix-trivial-coboundary} 
in the case where the greatest common divisor of all return words  
to some $a$ is larger than $1$, i.e., where the limit
\eqref{eq:matrix-trivial-coboundary} is not constantly equal to $1$.
Namely we apply the same proof as in the lemma, but starting with
${\mathbf t} (M-I)M^n$ instead of ${\mathbf t} M^n$. Also 
note that Ferenczi, Mauduit and Nogueira \cite{FMN} describe how 
to recover all dynamical eigenvalues from the adjacency matrix $M$.
\end{remark}

\begin{example}
Consider the morphism $1 \to 2$, $2\to 211$ with fixed point 
${\mathbf a} = 21122211211\cdots$ Since $11$ and $22$ each appear in 
${\mathbf a}$, then $1$ is the gcd of all return words to $1$ and
also to $2$, so we can apply 
Lemma~\ref{lem:matrix-trivial-coboundary}.
The adjacency matrix has determinant $-2$ and eigenvalues $-1,2$.  
Suppose that $\lambda=e^{2\pi i t}$ is a dynamical eigenvalue 
(for $(X_\varphi, T)$). Lemma~\ref{lem:matrix-trivial-coboundary}
tells us that we can write 
${\mathbf t}= { \mathbf t }_1+ {\mathbf t}_2$ where  
${\mathbf t} _1M^n \rightarrow 0$ and $ {\mathbf t}_2 M^j$ 
belong to ${\mathbb Z}^d$ for some $j\in {\mathbb N}$. But as $M$ 
has no eigenvalues inside the unit circle, this means that 
${\mathbf t }_1=0$, so that 
${\mathbf t }= (\frac{a}{2^j}, \frac{a}{2^j})$ for some odd
integer $a$. Hence  the fixed point cannot be $q$-automatic, for 
$q>2$. Now one verifies that if $n\geq 2$ and $j\geq 1$, then
${\mathbf t} M^n = (\frac{a_n}{2^j}, \frac{b_n}{2^j})$, where 
$a_n$ and $b_n$ are odd. But ${\mathbf t}M^n\rightarrow 0 
\ (\bmod \ {\mathbb Z}^2)$. Therefore $j=0$. Hence {\bf a} does not 
have all (nor any, in fact) $2^{j}$-th roots of unity as dynamical
eigenvalues, and therefore it is not $2$-automatic.
\end{example}

\begin{example}
Consider any morphism with adjacency matrix 
$\begin{pmatrix} 3&6\\1&2 \end{pmatrix}$. Here also, $1$ is the 
gcd of all return words to each of the two letters.
We cannot apply Lemma~\ref{lem:matrix-trivial-coboundary} since 
the adjacency matrix is not invertible. But we have 
$|\varphi^n(a)|=5^{n-1}\cdot4$, $|\varphi^n(b)|=5^{n-1}\cdot8$. 
By Theorem~\ref{thm:Host}, $e^{\frac{2\pi i}{5^n} }$ is an 
eigenvalue for each $n$. This suggests that any fixed point might 
be $5$-automatic, and indeed \cite[Theorem~1]{ADQ} gives this.
\end{example}

\section{Conclusion. A strategy for proving that fixed 
points of non-uniform morphisms are not automatic} 

The previous sections addressed the question whether 
``general'' sequences are automatic, but with an emphasis 
on sequences that are fixed points of non-uniform morphisms. 
For the latter, what precedes suggest a general strategy. 
Suppose that we are given the iterative fixed point 
${\mathbf u} = (u_n)_{n \geq 0}$ of a non-uniform morphism 
$\varphi$, whose transition matrix has spectral radius $\rho$. 
The following steps can be followed.

\bigskip

\noindent
First preliminary case: the morphism $\varphi$ is primitive.

\begin{itemize}

\item If the morphism $\varphi$ is primitive and if $\rho$ is 
not an integer, then ${\mathbf u}$ is not $q$-automatic 
for any $q \geq 2$. The case where $\rho$ is irrational
is covered in Theorem~\ref{irrational2} above. The case
where $\rho$ is rational but not integer is addressed at the
end of the first item below.

\item Compute the dynamical eigenvalues of $X_{\mathbf u}$ and
try to apply Theorem~\ref{dekking} in Section~\ref{dyn} above
\end{itemize}

\noindent
General case: no assumption of primitivity for the morphism
$\varphi$.

\begin{itemize}

\item If $\rho^k$ is never an integer for $k$ integer 
$\geq 1$, then the sequence $(u_n)_{n \geq 0}$ is not 
$q$-automatic for any $q \geq 2$ (Theorem~\ref{durand} above).
Note that in particular, this is the case if there exists 
some integer $t \geq 1$ such that $\rho^t$ is a rational number but not an integer.

\item If $\rho^k = d$ for some integer $d \geq 2$, thus, using 
Theorem~\ref{durand} again, $(u_n)_{n \geq 0}$ is either not 
$q$-automatic for any $q \geq 2$, or it is $d^{\ell}$-automatic
for some $\ell \geq 1$ (hence $d$-automatic), or it is 
ultimately periodic (hence $d$-automatic). Thus, proving that 
the sequence $(u_n)_{n \geq 0}$ is not $q$-automatic for any 
$q \geq 2$ is the same as proving that it is not $d$-automatic.

\end{itemize}

Thus we see that, up to replacing the morphism $\varphi$ 
with some integer power $\varphi^k$ (note that this replaces 
$\rho$ with $\rho^k$), the case that is not ``immediate'' now
is the case where $\rho$ is an integer. In this situation, we
can (try to) use one of the properties previously described: 

\begin{itemize}

\item[$*$] exhibiting infinitely many distinct elements
in the $d$-kernel of $(u_n)_{n \geq 0}$, 

\item[$*$] proving that some block occurs in $(u_n)_{n \geq 0}$
with irrational frequency, 

\item[$*$] proving that the complexity of $(u_n)_{n \geq 0}$ 
is not in ${\mathcal O}(n)$, 

\item[$*$] finding gaps of ``wrong'' size in the sequence of
integers $\{n, \ u_n = a\}$ for some value $a$, 

\item[$*$] looking at the Dirichlet series associated with 
$(u_n)_{n \geq 0}$, 

\item[$*$] studying the closed orbit of $(u_n)_{n \geq 0}$ 
under the shift, and so forth.

\end{itemize}

\bigskip

\noindent
{\bf Acknowledgments} \ We thank Val\'erie Berth\'e for fruitful 
discussions. We thank Dan Rust for having suggested looking at
return words.

\end{document}